\newtheorem{theorem}{Theorem}[section]
\newtheorem{lemma}[theorem]{Lemma}
\newtheorem{remark}[theorem]{Remark}
\newtheorem{proposition}[theorem]{Proposition}
\newtheorem{corollary}[theorem]{Corollary}
\newtheorem{definition}[theorem]{Definition}
\newenvironment{proof}{\begin{trivlist} \item[]{\em Proof.}}{\end{trivlist}}
\newcommand\be{\begin{equation}}
\newcommand\ee{\end{equation}}
\newcommand\bn{\begin{eqnarray}}
\newcommand\en{\end{eqnarray}}
\newcommand\bns{\begin{eqnarray*}}
\newcommand\ens{\end{eqnarray*}}
\newcommand\bd{\begin{definition}}
\newcommand\ed{\end{definition}}
\newcommand\br{\begin{remark}}
\newcommand\er{\end{remark}}
\newcommand\bt{\begin{theorem}}
\newcommand\et{\end{theorem}}
\newcommand\bp{\begin{proposition}}
\newcommand\ep{\end{proposition}}
\newcommand\bc{\begin{corollary}}
\newcommand\ec{\end{corollary}}
\newcommand\bl{\begin{lemma}}
\newcommand\el{\end{lemma}}
\newcommand\pf{\begin{proof}}
\newcommand\qed{\end{proof}\eop}
\newcommand\bC{{\mathbb C}}
\newcommand\bK{{\mathbb K}}
\newcommand\bR{{\mathbb R}}
\newcommand\bN{{\mathbb N}}
\newcommand\cF{{\cal F}}
\newcommand\cR{{\cal R}}
\newcommand{\NS}{\mbox{\scriptsize${\mathbb{N}}$}}
\newcommand{\F}{\mbox{$\mathcal{F}$}}
\def\eop{\hfill\rule{2.0mm}{2.0mm}}
\begin{document} 

\title{$A$-sequences, $Z$-sequence, and $B$-sequences of Riordan Matrices}

\author{Tian-Xiao He\\
{\small Department of Mathematics}\\
 {\small Illinois Wesleyan University, Bloomington, Illinois 61702, USA}\\
}

\date{}


\maketitle
\setcounter{page}{1}
\pagestyle{myheadings}
\markboth{Tian-Xiao  He}
{$A$-, $Z$-, and $B$-sequence of Riordan matrices } 

\begin{abstract}
\noindent 
We defined two type $B$-sequences of Riordan arrays and present the $A$-sequence characterization and $Z$-sequence characterization of the Riordan matrices with two type $B$-sequences. 
The subgroups characterized by $A$-sequences and $Z$-sequences are studied. The application of the sequence characterization to the RNA type matrices is discussed. 
Finally, we investigate the $A$-, $Z$-, and $B$-sequences of the Pascal like Riordan matrices.

\vskip .2in
\noindent
AMS Subject Classification: 05A15, 05A05, 15B36, 15A06, 05A19, 11B83.

\vskip .2in
\noindent
{\bf Key Words and Phrases:} Riordan matrix, Riordan group, $A$-sequence, $Z$-sequence, $B$-sequence, RNA type matrix, Pascal like matrix. 
\end{abstract}

\section{Introduction}

Riordan matrices are infinite, lower triangular matrices defined by the generating function of their columns. They form a group, called {\em the Riordan group} (see Shapiro, Getu, W. J. Woan and L. Woodson \cite{SGWW}).

More formally, let us consider the set of formal power series $\F = K[\![$$t$$]\!]$, where ${\bK}$ is the field of ${\bR}$ or ${\bC}$. The \emph{order} of $f(t)  \in \F$, $f(t) =\sum_{k=0}^\infty f_kt^k$ ($f_k\in {\bK}$), is the minimum number $r\in\bN$ such that $f_r \neq 0$; $\F_r$ is the set of formal power series of order $r$. Let $g(t) \in \F_0$ and $f(t) \in \F_1$; the pair $(g,\,f )$ defines the {\em (proper) Riordan matrix} $D=(d_{n,k})_{n,k\in \NS}=(g, f)$ having
  
\begin{equation}\label{Radef}
d_{n,k} = [t^n]g(t) f(t)^k
\end{equation}
or, in other words, having $g f^k$ as the generating function of the $k$th column of $(g,f)$. 
The {\it first fundamental theorem of Riordan matrices } means the action of the proper Riordan matrices on the formal power series presented by  

\[
(g(t), f(t)) h(t)=g(t) (h\circ f)(t),
\]
which can be simplified to $(g,f)h=gh(f)$. Thus we immediately see that the usual row-by-column product of two Riordan matrices is also a Riordan matrix:
\begin{equation}\label{Proddef}
    (g_1,\,f_1 )  (g_2,\,f_2 ) = (g_1 g_2(f_1),\,f_2(f_1)).
\end{equation}
The Riordan matrix $I = (1,\,t)$ is the identity matrix because its entries are $d_{n,k} = [t^n]t^k=\delta_{n,k}$. Let $(g\left( t\right),\,f(t)) $ be a Riordan matrix. Then its inverse is

\begin{equation}
(g\left( t\right),\,f(t))^{-1}=\left( \frac{1}{g(\overline{{f}}(t))},
\overline{f}(t),\right)  \label{Invdef}
\end{equation}%
where $\overline {f}(t)$ is the compositional inverse of $f(t)$, i.e., $(f\circ 
\overline{f})(t)=(\overline{f}\circ f)(t)=t$. In this way, the set $\mathcal{
R}$ of all proper Riordan matrices forms a group (see \cite{SGWW}) called the Riordan group,

Here is a list of six important subgroups of the Riordan group (see \cite{Sha}).

\begin{itemize}
\item The {\it Appell subgroup} $\{ (g(t),\,t):g(t)\in {\cF}_0\}$.
\item The {\it Lagrange (associated) subgroup} $\{(1,\,f(t)):f(t)\in{\cF}_1\}$.
\item The {\it Bell subgroup} $\{(g(t),\, tg(t)):g(t)\in{\cF}_0\}$. 
\item The {\it hitting-time subgroup} $\{(tf'(t)/f(t),\, f(t)):f(t)\in{\cF}_1\}$.
\item The {\it derivative subgroup} $\{ (f'(t), \, f(t)):f(t)\in{\cF}_1\}.$
\item The {\it checkerboard subgroup} $\{ (g(t),\, f(t)):g(t)\in{\cF}_0\,\mbox{is even and}\, f(t)\in{\cF}_1\,\mbox{is odd}\}$. 
\end{itemize}

An infinite lower triangular matrix $[d_{n,k}]_{n,k\in{\bN}}$ is a Riordan matrix if and only if a unique sequence $A=(a_0\not= 0, a_1, a_2,\ldots)$ exists such that for every $n,k\in{\bN}$  
\be\label{eq:1.1}
d_{n+1,k+1} =a_0 d_{n,k}+a_1d_{n,k+1}+\cdots +a_nd_{n,n}. 
\ee 
This is equivalent to 
\be\label{eq:1.2}
f(t)=tA(f(t))\quad \text{or}\quad t=\bar f(t) A(t).
\ee
Here, $A(t)$ is the generating function of the $A$-sequence. The above first formula is also called the {\it second fundamental theorem of Riordan matrices } Moreover, there exists a unique sequence $Z=(z_0, z_1,z_2,\ldots)$ such that every element in column $0$ can be expressed as the linear combination 
\be\label{eq:1.3}
d_{n+1,0}=z_0 d_{n,0}+z_1d_{n,1}+\cdots +z_n d_{n,n},
\ee
or equivalently,
\be\label{eq:1.4}
g(t)=\frac{1}{1-tZ(f(t))},
\ee
in which and throughly we always assume $g(0)=g_0=1$, a usual hypothesis for proper Riordan matrices. From \eqref{1.4}, we may obtain 

\[
Z(t)=\frac{g(\bar f(t))-1}{\bar f(t)g(\bar f(t))}.
\]

$A$- and $Z$-sequence characterizations of Riordan matrices were introduced, developed, and/or studied in Merlini, Rogers, Sprugnoli, and Verri \cite{MRSV}, Roger \cite{Rog}, Sprugnoli and the author \cite{HS}, \cite{He15}, etc. In \cite{HS} the expressions of the $A$- and $Z$-sequences of the product depend on the analogous sequences of the two factors are given. More precisely, considering two proper Riordan matrices $D_1=(g_1,f_1)$ and $D_2=(g_2,f_2)$ and their product, 

\[
D_3=D_1 D_2=(g_1g_2(f_1), f_2(f_1)).
\]
Denote by $A_i(t)$ and $Z_i(t)$, $i=1,2,$ and $3$, the generating functions of $A$-sequences and $Z$-sequences of $D_i$, $i=1,2,$ and $3$, respectively. Then 

\be\label{1.7}
A_3(t)=A_2(t)A_1\left( \frac{t}{A_2(t)}\right)
\ee
and 

\be\label{1.7-2}
Z_3(t)=\left( 1-\frac{t}{A_2(t)}Z_2(t)\right) Z_1\left( \frac{t}{A_2(t)}\right)+A_1\left( \frac{t}{A_2(t)}\right) Z_2(t).
\ee

Let $A(t)$ and $Z(t)$ be the generating functions of the $A$- and $Z$-sequences of a Riordan matrix $D=(g,f)$ and let us denote by $g^\ast(t)$, $f^\ast(t)$, $A^\ast(t)$ and $Z^\ast(t)$ the corresponding power series for the inverse $D^{-1}=(g^\ast, f^\ast)$ and its $A$-sequence and $Z$-sequence. We immediately observe that  $f^\ast(t) = \overline{f}(t)$. Now we have (see \cite{HS}) that the $A$-sequence and $Z$-sequence of the inverse Riordan matrix $D^{-1}$ are, respectively, 

\be\label{1.8}
A^\ast\left( \frac{t}{A(t)} \right)=  \frac{1}{A(t)}
\ee
and 

\be\label{1.8-2}
Z^\ast \left( \frac{t}{A(t)}\right)=\frac{Z(t)}{tZ(t)-A(t)}.
\ee

Since a Riordan matrix arising in a combinatorial context has non-negative entries, it can not be an involution. Hence, we consider the set of the pseudo-involutions of Riordan group ${\cR}$, which means the set of all $D \in {\cR}$  such that $MD$ (and $DM$) is an involution, where $M = (1,-t)$.

Cheon, Jin, Kim, and Shapiro \cite{CJKS} (see also in Burlachenko \cite{Bur} and Phulara and Shapiro \cite{PS}) shows that a Riordan matrix $(g,f)$ is a Bell type pseudo-involution, i.e., $f=zg$ and $(g,-f)^2=(1,t)$, if and only if there exists a $B$-sequence, $\tilde B=(\tilde b_0, \tilde b_1, \tilde b_2,\ldots)$, characterizing all entries of a Riordan matrix, which is defined by 

\be\label{6.5}
d_{n+1,k}= d_{n,k-1}+\sum_{j\geq 0}{\tilde b}_jd_{n-j,k+j}
\ee
for $k\geq 0$, where $d_{n,-1}=0$, $n\geq 0$. However, for non-Bell type Riordan matrices there might exist two type $B$-sequences 
$B=(b_0,b_1,b_2,\ldots)$ and $\hat B=(\hat b_0, \hat b_1,\hat b_2,\ldots)$ defined by 

\be\label{6.5-2}
d_{n+1,k}= d_{n,k-1}+\sum_{j\geq 0}{b}_jd_{n-j,k+j}
\ee
for $k\geq 1$, and 

\be\label{6.5-3}
d_{n+1,0}= \sum_{j\geq 0}{\hat b}_jd_{n-j,k+j},
\ee
respectively. The $B$-sequence defined for all entries, $d_{n,k}$, $k\geq 1$, of a Riordan matrix not in the first column is called the {\it type-I  $B$-sequence}. The $B$-sequence defined for all entries, $d_{n,0}$, of the first column is called the {\it type-II $B$-sequence}.   We will show that for a Bell type Riordan matrix, it either has no $B$-sequence or has two type $B$-sequences and they are the same. However, there exist non-Bell type Riordan matrices which may have only one type $B$-sequences, which existence and construction are determined by their $A$-sequences and $Z$-sequence, respectively. More precisely, the existence and construction of their type-I $B$-sequences are characterized by the $A$-sequences of the Riordan matrices, while the existence and construction of the type-II $B$-sequences are characterized by the $Z$-sequences of the Riordan matrices.

This paper is devoted to the $A$-sequence and $Z$-sequence characterization of a Riordan matrix possessing $B$-sequence and the $A$-sequence and $Z$-sequence characterization of some subgroups of ${\cR}$. In next section, we discuss $A$-sequence characterization of the Riordan matrices possessing type-I $B$-sequences. Section $3$ presents $Z$-sequence characterization of the Riordan matrices possessing  type-II $B$-sequences. In Section $4$, we show some subgroups characterized by $A$-sequences, $Z$-sequence, and/or $B$-sequences. In the last section, Section $5$, we investigate the $A$-, $Z$-, and $B$-sequences of the Pascal like Riordan matrices.

\section{$A$-sequences and type-I $B$-sequences of Riordan matrices }

We now consider the $A$-sequence characterization of the existence of the type-I $B$-sequence for a Riordan matrix? which may not be a Bell type Riordan matrix. Here, the type-I $B$-sequence is  defined by \eqref{6.5-2}.

\begin{proposition}\label{pro:2.1}
Let $(g,f)=(d_{n,k})_{n,k\geq 0}$ be a Riordan matrix with a type-I $B$-sequence satisfying \eqref{6.5-2}, and let 
$A(t)$ and $B(t)$ be the generating functions of the $A$-sequence and the $B$-sequence of the Riordan matrix, respectively. Then we have the following equivalent formulas:

\bn
&&f=t+tfB(tf),\label{1.2}\\
&&t=\bar f +t\bar f B(t\bar f),\label{1.4}\\
&&A(t)=1 +tB(t^2/A(t)).\label{1.5}
\en
\end{proposition}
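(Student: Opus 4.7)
The plan is to translate the combinatorial recurrence \eqref{6.5-2} into a generating-function identity column by column, and then to derive \eqref{1.4} and \eqref{1.5} by purely formal substitutions using the defining relations for $\bar f$ and for the $A$-sequence.

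First I would fix $k\ge 1$ and multiply \eqref{6.5-2} by $t^{n+1}$, summing over $n\ge 0$. The generating function of the $k$-th column is $C_k(t) = \sum_{n\ge 0} d_{n,k}t^n = g(t)f(t)^k$. The left-hand side becomes $(C_k(t)-d_{0,k})/t$, the first term on the right becomes $C_{k-1}(t)$, and the convolution $\sum_{j\ge 0} b_j d_{n-j,k+j}$ assembles into
\[
\sum_{j\ge 0} b_j\, t^j\, C_{k+j}(t) \;=\; g(t)f(t)^k \sum_{j\ge 0} b_j (tf(t))^j \;=\; g(t)f(t)^k\,B(tf(t)).
\]

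The key simplification comes from the fact that $k\ge 1$ and $f\in\cF_1$ force $d_{0,k}=[t^0]gf^k=0$. Hence the identity reduces to
\[
\frac{g f^k}{t} \;=\; g f^{k-1} + g f^k\,B(tf).
\]
Dividing by $gf^{k-1}$ (both factors are invertible as formal series, since $g\in\cF_0$ and $f\in\cF_1$) and multiplying by $t$ yields exactly $f = t + tfB(tf)$, which is \eqref{1.2}. Conversely, reading this step backwards shows that \eqref{1.2} implies the column identity for all $k\ge 1$, so \eqref{1.2} is equivalent to the $B$-sequence property.

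To pass to \eqref{1.4}, I would substitute $t\mapsto \bar f(t)$ into \eqref{1.2} and use $f(\bar f(t))=t$; the two occurrences of $tf$ both become $\bar f(t)\cdot t$, giving $t = \bar f + t\bar f\,B(t\bar f)$. For \eqref{1.5}, I would start from \eqref{1.2} in the form $A(f)=f/t=1+fB(tf)$ (using the $A$-sequence identity $f=tA(f)$ from \eqref{eq:1.2}) and substitute $t\mapsto\bar f(t)$ to obtain $A(t)=1+tB(t\bar f(t))$. Combining this with the consequence $\bar f(t)=t/A(t)$ of $t=\bar f(t)A(t)$ gives $t\bar f(t)=t^2/A(t)$, so $A(t)=1+tB(t^2/A(t))$.

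No step looks like a genuine obstacle, since each manipulation is a formal substitution whose inverse is available: the passage between \eqref{1.2} and \eqref{1.4} is by $t\leftrightarrow \bar f$, and \eqref{1.5} differs from \eqref{1.2} only through the invertible change of variable $t\mapsto\bar f(t)$ combined with $\bar f=t/A$. The only place one must be slightly careful is in asserting that the column identity holds for every $k\ge 1$ simultaneously from a single functional equation in $f$; this works precisely because the derivation above produces an identity independent of $k$ after dividing out the common factor $gf^{k-1}$.
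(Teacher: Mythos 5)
Your proposal is correct and follows essentially the same route as the paper: translate the recurrence \eqref{6.5-2} into the column generating functions $gf^k$, cancel the common factor $gf^{k-1}$ to get \eqref{1.2}, substitute $t\mapsto\bar f$ for \eqref{1.4}, and use $\bar f=t/A$ for \eqref{1.5}. The only cosmetic slip is saying you multiply by $t^{n+1}$ while the displayed formulas correspond to summing against $t^n$; either normalization yields the same identity, and your extra care with $d_{0,k}=0$ and the converse direction only adds detail the paper leaves implicit.
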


\begin{proof} 
Equation \eqref{6.5-2} can be written as 

\[
[t^{n+1}]gf^k= [t^n]gf^{k-1}+\sum_{j\geq 0}b_j [t^{n-j}]gf^{k+j},
\]
which implies \eqref{1.2}. Substituting $t=\bar f$, the compositional inverse of $f$ into \eqref{1.2}, we obtain \eqref{1.4}.The second fundamental theorem of Riordan matrix gives $\bar f =t/A$, which can be used to re-write equation \eqref{1.4} as \eqref{1.5}. 
\end{proof}
\eop

From Proposition \ref{pro:2.1}, we have the following result.

\begin{theorem}\label{thm:1.1}
Let $A=\sum_{j\geq 0}a_jt^j$ be the generating function of the $A$-sequence, $(a_j)_{j\geq 0}$, of a Riordan matrix $(g,f)$ that possesses a type-I $B$-sequence $( b_0, b_1, b_2,\ldots)$ defined by \eqref{6.5-2}, and let $f(t)=\sum_{j\geq 0} f_j t^j$. Then $a_0=1$ and $a_2=0$, or equivalently, 
$f_1=1$ and $f_3=f^2_2$.
\end{theorem}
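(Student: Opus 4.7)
The plan is to read off the low-order coefficients from equation \eqref{1.5} of Proposition \ref{pro:2.1}, namely
\[
A(t)=1+tB\!\left(\frac{t^2}{A(t)}\right),
\]
and then convert the resulting conditions on $A$ into conditions on $f$ via the second fundamental theorem $f=tA(f)$.

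First I would obtain $a_0=1$ trivially by setting $t=0$ in \eqref{1.5}. For $a_2=0$, I would exploit the fact that $t^2/A(t)$ has order $\geq 2$. More precisely, writing $A(t)=1+a_1t+a_2t^2+O(t^3)$, one has $t^2/A(t)=t^2+O(t^3)$, so
\[
B\!\left(\frac{t^2}{A(t)}\right)=b_0+b_1t^2+O(t^3),
\]
and hence
\[
tB\!\left(\frac{t^2}{A(t)}\right)=b_0t+b_1t^3+O(t^4).
\]
Matching coefficients with $A(t)=1+a_1t+a_2t^2+a_3t^3+\cdots$ gives $a_1=b_0$, $a_2=0$, and $a_3=b_1$. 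So the only nontrivial point is the bookkeeping observation that $t^2/A(t)$ has no $t$-term, which forces the coefficient of $t^2$ on the right of \eqref{1.5} to vanish.

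For the equivalence with the stated conditions on $f$, I would expand $f(t)=f_1t+f_2t^2+f_3t^3+\cdots$ in the identity $f=tA(f)$. The coefficient of $t$ gives $f_1=a_0$, the coefficient of $t^2$ gives $f_2=a_1f_1$, and the coefficient of $t^3$ gives
\[
f_3 = a_1 f_2 + a_2\,[t^2](f^2) = a_1 f_2 + a_2 f_1^2.
\]
Under $a_0=1$ this reads $f_1=1$ and $f_2=a_1$, and then $f_3=a_1^2+a_2=f_2^2+a_2$. Consequently $a_0=1$ is equivalent to $f_1=1$, and, given $f_1=1$, the condition $a_2=0$ is equivalent to $f_3=f_2^2$.

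There is no real obstacle beyond the coefficient comparison; the whole proof is essentially Taylor expansion of \eqref{1.5} up to order $t^2$ combined with the first three coefficients of $f=tA(f)$.
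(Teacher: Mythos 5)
Your proof is correct and follows essentially the same route as the paper: extract $a_0=1$ and $a_2=0$ by comparing low-order coefficients in $A(t)=1+tB(t^2/A(t))$ (the key point being that $t^2/A(t)$ has order $2$, so the right-hand side has no $t^2$-term), and then translate to the conditions on $f$ via $f=tA(f)$. If anything, your version is slightly tidier on the ``equivalently'' clause, since you retain the $a_2f_1^2$ term in the coefficient of $t^3$ and thereby make the two-way equivalence $a_2=0\Leftrightarrow f_3=f_2^2$ explicit, whereas the paper only derives the forward implication.
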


\begin{proof}
From \eqref{1.5} and noting $a_0\not= 0$, it is easy to get  

\[
a_0=1.
\]
Denote $1/A(t)$ by $\hat C(t)$. Since 

\[
\hat C(t)=\frac{1}{A(t)}=c_0+\sum_{j\geq 1}c_jt^j,
\]
where $c_0=1$ and for $j\geq 1$ 
\be\label{1.6-1}
c_j=-\sum_{k\geq 1}a_kc_{j-k}.
\ee
Thus, we may solve $c_j$ from the above equations and substitute them to $\hat C(t)$ to obtain 
\be\label{1.6-0}
\hat C(t)=\frac{1}{A(t)}=1-a_1t+(a_1^2-a_2)t^2+(2a_1a_2-a_1^3-a_3)t^3+\cdots.
\ee
Comparing the coefficients of the powers of $t$ on the both sides of \eqref{1.5},
\be\label{1.6}
1+\sum_{j\geq 1}a_j t^j=1+b_0 t+b_1t^3\hat C(t)+b_2t^5\hat C(t)^2+b_3t^7\hat C(t)^3+\cdots,
\ee
we obtain the following system
\bns
&&a_1=b_0,\\
&&a_2=0,\\
&&a_3=b_1,\\
&&a_4=b_1c_1=-a_1b_1,\\
&&\cdots .
\ens
On the other hand, from the second fundamental theorem of Riordan matrices, $f=tA(f)$, we have 
\bns
&&f_1t+f_2t^2+f_3t^3+\cdots\\
&=&a_0t+a_1t(f_1t+f_2t^2+f_3t^3+\cdots)+a_2t^2(f_1t+f_2t^2+f_3t^3+\cdots)^2+\cdots.
\ens
Thus,  

\[
f_1=a_0=1, f_2=a_1f_1, f_3=a_1f_2,\ldots,
\]
which imply $f_1=1$ and $f_3=f_2^2$. 
\end{proof}
\eop

Theorem \ref{thm:1.1} gives a necessary condition for the existence of type-I $B$-sequence of a Riordan matrix. Now we establish a necessary and sufficient condition for the existence of type-I $B$-sequence of a Riordan matrix and its computation. Denote by $D_{n,m,k}$ the set of 
\be\label{eq:2.1+2}
D_{n,m,k}=\{{\mathbf {i}}=(i_{1},i_{2},\dots ,i_{k})\,:\ i_{1}+i_{2}+\cdots +i_{k}=n, i_1,i_2,\ldots,i_k\not=0\}
\ee
for $1\leq k\leq m\leq n$, where $k$ is the length of ${\mathbf{i}}$. Then the set 
\bn\label{eq:2.1+0}
D_{n,m}&&=\cup^m_{k=1} D_{n,m,k}.
\en
is the set of compositions of $n$ with the number of parts $k=1,2,\ldots, m$. If $m=n$, we write $D_{n,n}$ as $D_n$, namely, 
\be\label{eq:2.1+1}
{\mathcal {D}}_{n}=\{{\mathbf{i}}=(i_{1},i_{2},\dots ,i_{k})\,:\ 1\leq k\leq n,\ i_{1}+i_{2}+\cdots +i_{k}=n, i_1, i_2,\ldots, i_k\not=0 \},
\ee
\begin{theorem}\label{thm:1.1-2}
Let $(g,f)$ be a Riordan matrix, and let $A(t)=\sum_{j\geq 0} a_jt^j$ be the generating function of the $A$-sequence of $(g,f)$. Denote $\tilde c_j=c_{j-2}$ for $j\geq 2,$  where $c_j$ for $j\geq 0$ are shown in \eqref{1.6-1}, $\tilde c_{0}=0$, and $\tilde c_1=0$. Then $(g,f)$ has a $B$-sequence $(b_j)_{j\geq 0}$ defined by \eqref{6.5-2}, if and only if $A$-sequence of $(g,f)$ satisfies $a_0=1$, $a_2=0$ and for $\ell\geq 2$ 

\be\label{eq:2.1}
a_{2\ell}=\sum _{\mathbf {i} \in {\mathcal {D}}_{2\ell -1,\ell-1}}b_{k}\tilde c_{i_{1}}\tilde c_{i_{2}}\cdots \tilde c_{i_{k}},
\ee
where the index set, following the notation \eqref{eq:2.1+2}, is 

\bns
{\mathcal {D}}_{2\ell -1,\ell-1}&&=\cup^{\ell-1}_{k=1}{\mathcal {D}}_{2\ell-1,\ell-1,k},
\ens
where $D_{2\ell -1, \ell-1, k}$ are defined by \eqref{eq:2.1+2}. The right-hand side of equation \eqref{eq:2.1} is a function of $b_{j}$ for $1\leq j\leq \ell -1$ and $\tilde c_j$ for $0\leq j\leq 2\ell -1$ (or equivalently, $a_{j}$ for $0\leq j\leq 2\ell -3$). Here, for $\ell \geq 0$

\be\label{eq:2.2}
b_\ell=a_{2\ell+1}-\sum _{\mathbf {i} \in {\mathcal {D}}_{2\ell,\ell -1}}b_{k}\tilde c_{i_{1}}\tilde c_{i_{2}}\cdots \tilde c_{i_{k}},
\ee
where ${\mathcal {D}}_{0,-1}={\mathcal {D}}_{2,0}=\phi$, and for $\ell\geq 2$

\bns
{\mathcal {D}}_{2\ell,\ell-1 }&&=\cup^{\ell-1}_{k=1}{\mathcal D}_{2\ell, \ell-1,k}.\\
\ens
The summation of the right-hand side of \eqref{eq:2.2} is a function of $b_j$ for $1\leq j\leq \ell-1$ and $\tilde c_j$ for $0\leq j\leq 2\ell$ (or equivalently, $a_{j}$ for $0\leq j\leq 2\ell -2$). 

Furthermore,  $B$-sequence $(b_1,b_2,b_3,\ldots)$ can be evaluated by using \eqref{eq:2.2}, where $a_{2\ell +1}$, $\ell\geq 1$, are arbitrary.  Thus, we have 

\bns
&&a_0=1,\quad b_0=a_1, \quad a_2=0, \quad b_1=a_3,\\
&&a_4=b_1c_1=-b_1a_1=-a_1a_3,\\
&&b_2=a_5-b_1c_2=a_5-b_1(a_1^2-a_2),
\ens
etc. 
\end{theorem}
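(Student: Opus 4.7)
\bigskip

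\textbf{Proof proposal.}

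The plan is to start from the functional equation \eqref{1.5} of Proposition~\ref{pro:2.1}, $A(t)=1+tB(t^{2}/A(t))$, which already captures the existence of a type-I $B$-sequence, and to extract $[t^{n}]$ from both sides. The bookkeeping device is the auxiliary series $\tilde C(t):=t^{2}/A(t)=\sum_{j\geq 0}\tilde c_{j}t^{j}$; since $1/A(t)=\sum_{j\geq 0}c_{j}t^{j}$ has constant term $c_{0}=1$, multiplication by $t^{2}$ shifts indices by two and yields $\tilde c_{0}=\tilde c_{1}=0$, $\tilde c_{2}=1$, and $\tilde c_{j}=c_{j-2}$ for $j\geq 2$, matching the statement. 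Rewritten, \eqref{1.5} becomes
\[
A(t)=1+t\sum_{k\geq 0}b_{k}\tilde C(t)^{k}.
\]

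Next I would expand, for $k\geq 1$,
\[
\tilde C(t)^{k}=\sum_{n\geq 2k}t^{n}
\sum_{\substack{i_{1}+\cdots+i_{k}=n\\ i_{j}\geq 1}}
\tilde c_{i_{1}}\cdots\tilde c_{i_{k}},
\]
where the inner sum is precisely $\sum_{\mathbf{i}\in \mathcal{D}_{n,m,k}}\tilde c_{i_{1}}\cdots \tilde c_{i_{k}}$ for any $m\geq k$. The vanishing of $\tilde c_{0}$ and $\tilde c_{1}$ forces every part to be $\geq 2$, so the lowest surviving monomial is $t^{2k}\tilde c_{2}^{k}=t^{2k}$. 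Matching $[t^{n}]$ on the two sides of the rewritten \eqref{1.5} then gives, for $n\geq 1$,
\[
a_{n}=\sum_{k=0}^{\lfloor(n-1)/2\rfloor}b_{k}\,[t^{n-1}]\tilde C(t)^{k},
\]
with the $k=0$ term equal to $\delta_{n,1}$.

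Separating by parity produces \eqref{eq:2.1} and \eqref{eq:2.2}. For $n=2\ell$ with $\ell\geq 1$, the upper limit is $\ell-1$ and the $k=0$ term vanishes, delivering \eqref{eq:2.1}; the degenerate case $\ell=1$ then forces $a_{2}=0$ from an empty sum, while $n=1$ yields $a_{1}=b_{0}$. For $n=2\ell+1$ the upper limit is $\ell$, and the extremal term $k=\ell$ contributes $b_{\ell}\tilde c_{2}^{\ell}=b_{\ell}$ because $(2,2,\ldots,2)$ is the unique composition of $2\ell$ into $\ell$ parts consistent with $\tilde c_{0}=\tilde c_{1}=0$; isolating this diagonal term yields \eqref{eq:2.2}. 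For the converse, I would read \eqref{eq:2.2} as a well-posed recursive definition of $b_{\ell}$ in terms of $a_{0},\ldots,a_{2\ell+1}$ and $b_{0},\ldots,b_{\ell-1}$. The hypotheses $a_{0}=1$, $a_{2}=0$, and the even-index identities \eqref{eq:2.1} are then exactly what is required for the reconstructed $B(t)=\sum b_{k}t^{k}$ to satisfy \eqref{1.5}, and Proposition~\ref{pro:2.1} returns the type-I $B$-sequence identity \eqref{6.5-2}.

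The main technical nuisance will be aligning the paper's three-index notation $\mathcal{D}_{n,m,k}$ with the ranges produced by the coefficient extraction --- in particular, verifying that the outer upper bound $m=\ell-1$ appearing in \eqref{eq:2.1} and \eqref{eq:2.2} really coincides with $\lfloor(n-1)/2\rfloor$ once the $k=\ell$ diagonal piece has been isolated in the odd case. Past this bookkeeping the argument is a single coefficient extraction followed by a parity split, and the low-index values listed at the end of the statement ($a_{2}=0$, $b_{0}=a_{1}$, $b_{1}=a_{3}$, $a_{4}=-a_{1}a_{3}$, $b_{2}=a_{5}-b_{1}(a_{1}^{2}-a_{2})$) serve as immediate sanity checks on the recursion.
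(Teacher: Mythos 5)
Your proposal is correct and follows essentially the same route as the paper: both extract coefficients from the identity $A(t)=1+tB(t^{2}/A(t))$ of Proposition~\ref{pro:2.1}, use the vanishing of $\tilde c_{0}$ and $\tilde c_{1}$ to cut the number of parts down to at most $\lfloor (n-1)/2\rfloor$ (the paper phrases this as a pigeonhole argument, you as the observation that $\tilde C(t)^{k}$ has order $2k$), and then split by parity, isolating the diagonal composition $(2,\ldots,2)$ to solve for $b_{\ell}$. The only cosmetic difference is that the paper invokes Fa\'a di Bruno's formula by name where you expand the powers of $\tilde C$ directly, and your converse is handled at the same (brief) level of detail as the paper's.
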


\begin{proof}
Consider the second term of the right-hand side of \eqref{1.5}. Let $B(t)=\sum _{n=0}^{\infty }{b_{n}}x^{n}$, and let $t^2\hat C(t)=\sum _{n=0}^{\infty }{\tilde c_{n}}x^{n}$ be the formal power series with $\tilde c_{0}=0$, $\tilde c_1=0$ and $\tilde c_j=c_{j-2}$ for $j\geq 2.$ Then the composition $B\circ (t^2\hat C)$ is again a formal power series, which can be written as, by using the {\it Fa\'a di Bruno's formula},   

\be\label{0.0}
B(t^2\hat C(t))=\sum _{n=0}^{\infty }{d_{n}}t^{n},
\ee
where $d_0 = b_0$ and the other coefficient $d_n$ for $n \geq 1$ can be expressed as a sum over compositions of $n$ or as an equivalent sum over partitions of $n$. More precisely, 

\be\label{0.00-1}
d_{n}=\sum _{\mathbf {i} \in {\mathcal {D}}_{n}}b_{k}\tilde c_{i_{1}}\tilde c_{i_{2}}\cdots \tilde c_{i_{k}},
\ee
where ${\mathcal{D}}_n$ is defined by \eqref{eq:2.1+1}. 
\footnote{Noting that the Fa\'a di Bruno formula can be considered as an application of the first fundamental theorem of Riordan matrices from Comtet \cite{comtet}, Roman \cite{Roman82_1} and \cite{Roman84book} and Roman and Rota \cite{RomanRota78}.}

We now apply \eqref{0.0} to \eqref{1.5} and compare the coefficients of the same power terms on the both sides of \eqref{1.5} to obtain 

\be\label{0.-2}
a_0=1 \quad \mbox{and}\quad a_n=d_{n-1}
\ee
for $n\geq 1$, where $d_n$ are presented by \eqref{0.00-1}. Thus, we have $a_0=1$ and $a_n=d_n$. 
It is clearly, $a_1=b_0$, $a_2=d_1=b_1\tilde c_1=0$, $a_3=d_2=b_1\tilde c_2=b_1$, 

\[
a_4=d_3=b_1\tilde c_3=b_1c_1=-a_1b_1,\quad a_5=d_4=b_1\tilde c_4+b_2\tilde c_2^2=b_1(a_1^2-a_2)+b_2.
\]
In general, if $n=2\ell+1$, $\ell\geq 2$, there is 

\[
a_{2\ell +1}=d_{2\ell}=\sum _{\mathbf {i} \in {\mathcal {D}}_{2\ell }}b_{k}\tilde c_{i_{1}}\tilde c_{i_{2}}\cdots \tilde c_{i_{k}},
\]
where ${\mathcal {D}}_{2\ell}$ is defined by \eqref{eq:2.1+1} for $n=2\ell$. 

By using the pigeonhole principle, for every $\ell+1 \leq k\leq 2\ell $, $(i_1,i_2,\ldots, i_k)$ contains at least one component 
to be $1$, which implies that $\tilde c_{i_{1}}\tilde c_{i_{2}}\cdots \tilde c_{i_{k}}=0$. Thus, the summation over the index set ${\mathcal {D}}_{2\ell}$ can be reduced to the summation over the index set 

\[
{\mathcal D}_{2\ell, \ell}=\{{\mathbf {i}}=(i_{1},i_{2},\dots ,i_{k})\,:\ 1\leq k\leq \ell ,\ i_{1}+i_{2}+\cdots +i_{k}=2\ell, i_1,i_2,\ldots,i_k\not=0 \}
\]
and consequently, 

\[
a_{2\ell+1}=d_{2\ell}=b_{\ell}\tilde c_2^\ell+\sum_{\mathbf{i}\in {\mathcal D}_{2\ell , \ell-1}}b_{k}\tilde c_{i_{1}}\tilde c_{i_{2}}\cdots \tilde c_{i_{k}},
\]
which implies \eqref{eq:2.2} because $\tilde c_2=c_0=1/a_0=1$. 

If $n=2\ell$, then from \eqref{1.5} and \eqref{0.00-1} we obtain 

\[
a_{2\ell}=d_{2\ell -1}=\sum_{{\mathbf {i}}\in {\mathcal D}_{2\ell -1}}b_k\tilde c_{i_1}\tilde c_{i_2}\ldots \tilde c_{i_k}.
\]
By using pigeonhole principle, the above summation over the index set ${\mathcal D}_{2\ell -1}$ is reduced to the summation over the index set 

\[
{\mathcal {D}}_{2\ell -1,\ell-1}=\{{\mathbf {i}}=(i_{1},i_{2},\dots ,i_{k})\,:\ 1\leq k\leq \ell -1,\ i_{1}+i_{2}+\cdots +i_{k}=2\ell -1, i_1,i_2,\ldots,i_k\not=0\},
\]
which proves that \eqref{eq:2.1} is true. 

Conversely, if \eqref{eq:2.1} and \eqref{eq:2.2} hold, one may immediately derive \eqref{1.5}, i.e., the riordan matrix $(g,f)$ possessing the $A$-sequence has the $B$-sequence that can be constructed by using \eqref{eq:2.2}. 
\end{proof}
\eop
\medbreak

\noindent{\bf Example 2.1} 
Considering the matrix $R=((1-t)g(t)/(1-tg(t)),tg(t))$ (see Cameron and Nkwanta \cite{CN}), where 

\be\label{RNA}
g(t)= \frac{1-t+t^{2}-\sqrt{1-2t-t^2-2t^3+t^4}}{2t^{2}}.
\ee
We may write its first few entries as 

\[
R=\left[\begin{array}{cccccccc}
1 &  &  &  &  &  &  &\\ 
1 & 1 &  &  &  &  & &\\ 
2 & 2 & 1 &  &  &  &  &\\ 
5 & 4 & 3& 1 &  &  & &\cdots  \\ 
12 &10 & 7 & 4& 1 &  & & \\ 
29 & 25 & 18 & 11& 5 & 1 &&  \\ 
71 & 62 & 47 & 30& 16& 6 &  1&\\ 
&  &  & \cdots  &  &  & &
\end{array}
\right].
\]
We call $R$ an RNA type matrix because it is related to the RNA matrix $R^\ast$ shown in Example 2.2. It is easy to find that the $A$-sequence and type-I $B$-sequence of $R$ are 

\[
A=(1,1,0,1,-1,\ldots)\quad \mbox{and} \quad B=(1,1,1,1,1,\ldots),
\]
respectively, which satisfy 

\[
a_0=1,b_0=a_1=1, a_2=0, b_1=a_3=1, a_4=-a_1a_3=-1,\ldots.
\]
From the second fundamental theorem of Riordan matrices, we have 

\[
A(t)=\frac{1+t+t^2+\sqrt{1+2t-t^2+2t^3+t^4}}{2}.
\]
Thus, from \eqref{1.5} we obtain that the generating function of the $B$-sequence of $R$ is 

\[
B(t)=\frac{1}{1-t}.
\]
On the other hand, we have $A(t)=t^2g(-t)$ or $\bar f(t)=-f(-t)=tg(-t)$ because $f(t)=tg(t)$ and $A(t)=t\bar f(t)$. Moreover, noticing \eqref{1.5} and 
\[
\frac{A(t)-1}{t}=\frac{1}{1-t^2/A(t)},
\]
we can also explain why $B(t)=1/(1-t)$. In addition, from \eqref{1.2} and $B(t)=1/(1-t)$ we obtain an identity for $g(t)$ 
\[
g(t)=1+\frac{tg(t)}{1-t^2g(t)},
\]
or equivalently,
\be\label{RNA-2}
(1-t+t^2)g(t)=1+t^2g(t)^2.
\ee

\medbreak

\noindent{\bf Remark 2.1}
An alternative way to present $d_n$ shown in the proof of Theorem \ref{thm:1.1-2} is 
\be\label{0.00-2}
d_{n}=\sum _{k=1}^{n}a_{k}\sum _{\mathbf {\pi } \in {\mathcal {P}}_{n,k}}{\binom {k}{\pi _{1},\pi _{2},...,\pi _{n}}}\tilde c_{1}^{\pi _{1}}\tilde c_{2}^{\pi _{2}}\cdots \tilde c_{n}^{\pi _{n}},
\ee
where
\[
{\mathcal {P}}_{n,k}=\{(\pi _{1},\pi _{2},\dots ,\pi _{n})\,:\ \pi _{1}+\pi _{2}+\cdots +\pi _{n}=k,\ \pi _{1}\cdot 1+\pi _{2}\cdot 2+\cdots +\pi _{n}\cdot n=n\}
\]
is the set of partitions of n into k parts in frequency-of-parts form.

The first form shown in \eqref{0.00-1} is obtained by picking out the coefficient of $t^n$ in $(\tilde c_{1}t+\tilde c_{2}t^{2}+\cdots )^{k}$ by inspection, and the second form \eqref{0.00-2} is then obtained by collecting like terms, or alternatively, by applying the multinomial theorem.
\medbreak

Theorem \ref{thm:1.1-2} has an analogy based on the expression \eqref{1.2}.

\begin{theorem}\label{thm:1.1-3}
Let $(g,f)$ be a Riordan matrix, and let $f(t)=\sum_{j\geq 1} f_jt^j$. Denote $\tilde f_j=f_{j-1}$ for $j\geq 1,$  where $f_{-1}, f_0=0$, and $\tilde b_j=b_{j-1}$ for $j\geq 1$, where $b_{-1}=0$. Then $(g,f)$ has a type-I $B$-sequence defined by \eqref{6.5-2} if and only if $f_1=1$, $f_3=f^2_2$ and for $\ell\geq 2$ there are 
\bn\label{eq:2.1-2}
f_{2\ell +1}=\sum _{\mathbf {i} \in {\mathcal {D}}_{2\ell +1}}\tilde b_{k}\tilde f_{i_{1}}\tilde f_{i_{2}}\cdots \tilde f_{i_{k}}
=f_2f_{2\ell}+ \sum _{\mathbf {i} \in {\mathcal {D}}'_{2\ell +1,\ell}}\tilde b_{k}\tilde f_{i_{1}}\tilde f_{i_{2}}\cdots \tilde f_{i_{k}},
\en
where the index sets are 
\bns
{\mathcal {D}}_{2\ell +1}&&=\cup^{2\ell+1}_{k=1}{\mathcal D}_{2\ell+1,2\ell+1,k}
\ens
and
\bns
{\mathcal {D}'}_{2\ell +1,\ell}&&=\cup^\ell_{k=2}{\mathcal D}_{2\ell+1,\ell,k}.
\ens
The summation on the leftmost-hand side of equation \eqref{eq:2.1-2} is a function of $b_{j}$ for $1\leq j\leq \ell -1$ and $f_j$ for $1\leq j\leq 2\ell-1$. Here, for $\ell\geq 1$, 
\be\label{eq:2.2-2}
b_{\ell-1}=f_{2\ell }-\sum _{\mathbf {i} \in {\mathcal {D}}_{2\ell,\ell-1}}\tilde b_{k}\tilde f_{i_{1}}\tilde f_{i_{2}}\cdots \tilde f_{i_{k}},
\ee
where ${\mathcal {C}}_{2,0}=\phi$ and for $\ell\geq 2$, 

\bn\label{2.2-3}
{\mathcal {D}}_{2\ell,\ell-1 }&&=\cup^{\ell-1}_{k=1}{\mathcal D}_{2\ell, \ell-1,k}.
\en
The summation of the right-hand side of \eqref{eq:2.2} is a function of $b_j$ for $0\leq j\leq \ell-2$ and $f_j$ for $1\leq j\leq 2\ell-1$. 

Furthermore,  $B$-sequence $(b_1,b_2,b_3,\ldots)$ can be evaluated by using \eqref{eq:2.2-2}, where $f_{2\ell}$, $\ell\geq 1$, are arbitrary. Thus, we have 

\bns
&&f_0=0,\quad f_1=1,\quad b_0=f_2, \quad f_3=b_0f_2=b_0^2, \quad b_1=f_4-b_0^3,\\
&&f_5=b_0f_4+b_1(2f_1f_2)=b_0^4+3b_0b_1,\\
&&b_2=f_6-b_0f_5-b_1(2f_1f_3+f_2^2)=f_6-b_0^5-5b_0^2b_1,
\ens
etc. 
\end{theorem}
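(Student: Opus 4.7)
The plan is to parallel the argument used for Theorem \ref{thm:1.1-2}, but starting from the functional equation \eqref{1.2} instead of \eqref{1.5}. By Proposition \ref{pro:2.1}, the existence of a type-I $B$-sequence is equivalent to $f = t + tfB(tf)$, so I would extract coefficients of $t^n$ on both sides and translate that identity into a system of recurrences relating the $f_n$ and $b_\ell$. The shifted indices $\tilde f_j = f_{j-1}$ and $\tilde b_j = b_{j-1}$ are introduced precisely to absorb the outer factor $tf$ in $tf\,B(tf)$ and the inner substitution $tf$ inside $B$: the right-hand side rewrites as $t + \sum_{k\ge 1}\tilde b_k(tf)^k$ with $tf=\sum_{j\ge 1}\tilde f_j t^j$ and $\tilde f_1=0$.

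The next step is the standard multinomial (equivalently Fa\'a di Bruno) expansion of $(tf)^k$, exactly as in the proof of Theorem \ref{thm:1.1-2}, yielding for $n\ge 2$
\[
f_n = [t^n]\bigl(tf\,B(tf)\bigr) = \sum_{\mathbf{i}\in\mathcal{D}_n}\tilde b_k\,\tilde f_{i_1}\tilde f_{i_2}\cdots\tilde f_{i_k},
\]
while the coefficient of $t^1$ gives $f_1=1$. Since $\tilde f_1=0$, any composition of $n$ containing a part equal to $1$ contributes zero, so the sum restricts by pigeonhole to compositions with all parts $\ge 2$, which in turn have length $k\le\lfloor n/2\rfloor$; this collapses $\mathcal{D}_n$ to $\mathcal{D}_{n,\lfloor n/2\rfloor}$.

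For $n=2\ell$ the unique length-$\ell$ composition with all parts $\ge 2$ is $(2,2,\dots,2)$, contributing $\tilde b_\ell\tilde f_2^{\,\ell} = b_{\ell-1}\cdot f_1^{\,\ell} = b_{\ell-1}$, and peeling off this single term from $f_{2\ell}$ yields the explicit recursion \eqref{eq:2.2-2}. For $n=2\ell+1$ the length-$1$ term is $\tilde b_1\tilde f_{2\ell+1} = b_0 f_{2\ell} = f_2 f_{2\ell}$, using $b_0 = f_2$ from the $n=2$ case; peeling this off isolates \eqref{eq:2.1-2}. The boundary relations $f_1=1$ and $f_3=f_2^2$ are just the $n=1$ case and the $\ell=1$ specialization of \eqref{eq:2.1-2}. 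The converse is immediate: granted the recurrences, the same coefficient-by-coefficient identity reassembles \eqref{1.2}, and Proposition \ref{pro:2.1} then returns the type-I $B$-sequence. The main obstacle I expect is the pigeonhole bookkeeping---in particular, verifying that for $n=2\ell$ the extremal composition $(2,\dots,2)$ contributes $b_{\ell-1}$ with coefficient exactly $1$ so the recursion cleanly solves for $b_{\ell-1}$, and for $n=2\ell+1$ separating the length-$1$ contribution in the form $f_2 f_{2\ell}$ so that \eqref{eq:2.1-2} splits as stated. The remainder is a direct transcription of the Fa\'a di Bruno calculation from the $A$-side equation \eqref{1.5} in Theorem \ref{thm:1.1-2} to the $f$-side equation \eqref{1.2}.
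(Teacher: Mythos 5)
Your proposal follows the paper's own argument essentially step for step: both start from the functional equation $f=t+tfB(tf)$ of Proposition \ref{pro:2.1}, expand via Fa\'a di Bruno over compositions, use $\tilde f_1=f_0=0$ to discard compositions containing a part equal to $1$ (reducing the index set to length at most $\lfloor n/2\rfloor$), peel off the extremal terms $\tilde b_\ell\tilde f_2^{\,\ell}=b_{\ell-1}$ for $n=2\ell$ and $\tilde b_1\tilde f_{2\ell+1}=b_0f_{2\ell}=f_2f_{2\ell}$ for $n=2\ell+1$, and reverse the coefficient comparison for the converse. The argument is correct and matches the paper's proof.
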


\begin{proof}
From \eqref{1.2} we have 

\be\label{1.3-2}
f=t+tfB(tf)=t+\sum_{j\geq 0}b_j(tf)^{j+1}=t+\sum_{j\geq 0}\tilde b_j(tf)^j,
\ee
where $\tilde b_j=b_{j-1}$ and $\tilde b_0=b_{-1}=0$, and we may write 

\[
tf=\sum_{j\geq 1} \tilde f_jt^j.
\]
By using the Fa\'a di Bruno's formula we have 

\be\label{0.0-2}
\sum_{j\geq 0}\tilde b_j(tf)^j=\sum _{n=0}^{\infty }{c_{n}}t^{n},
\ee
where $c_0 = \tilde b_0=0$ and the other coefficient $c_n$ for $n \geq 1$ can be expressed as a sum over compositions of $n$ or as an equivalent sum over partitions of $n$. More precisely, 

\be\label{0.00-3}
c_{n}=\sum _{\mathbf {i} \in {\mathcal {D}}_{n}}\tilde b_{k}\tilde f_{i_{1}}\tilde f_{i_{2}}\cdots \tilde f_{i_{k}},
\ee
where ${\mathcal {D}}_n$ is defined by \eqref{eq:2.1+1}. Particularly, when $n=2\ell$, in the index set ${\mathcal {D}}_{2\ell}$, if $\ell+1 \leq k\leq 2\ell$, then there is at least one component of $(i_1,i_2,\ldots, i_k)$ to be $1$. Thus, the corresponding $\tilde f_{i_1}\tilde f_{i_2}\ldots \tilde f_{i_k}=0$. Consequently, the summation in \eqref{0.00-3} over the index set ${\mathcal {D}}_{2\ell}$ is reduced to the summation over the index set 

\[
{\mathcal D}_{2\ell,\ell}= \{{\mathbf{i}}=(i_{1},i_{2},\dots ,i_{k})\,:\ 1\leq k\leq \ell, i_{1}+i_{2}+\cdots +i_{k}=2\ell, i_1, i_2,\ldots, i_k\not=0 \}.
\]
Then combining \eqref{1.3-2} and \eqref{0.0-2} yields 

\be\label{f2l}
f_{2\ell}=c_{2\ell}=\sum _{\mathbf {i} \in {\mathcal {D}}_{2\ell }}\tilde b_{k}\tilde f_{i_{1}}\tilde f_{i_{2}}\cdots \tilde f_{i_{k}}
=\tilde b_\ell (\tilde f_{2})^\ell+\sum _{\mathbf {i} \in {\mathcal {D}}_{2\ell , \ell-1}}\tilde b_{k}\tilde f_{i_{1}}\tilde f_{i_{2}}\cdots \tilde f_{i_{k}},
\ee
where ${\mathcal {D}}_{2\ell, \ell-1}$ is shown in \eqref{2.2-3}, which implies \eqref{eq:2.2-2}. 

If $n=2\ell +1$, then \eqref{eq:2.1+1} becomes 
\[
{\mathcal {D}}_{2\ell +1}= \{{\mathbf{i}}=(i_{1},i_{2},\dots ,i_{k})\,:\ 1\leq k\leq 2\ell+1,\ i_{1}+i_{2}+\cdots +i_{k}=2\ell+1, i_1, i_2,\ldots, i_k\not=0 \}. 
\]
In the above index set, if $k=1$, then $(i_1)=(2\ell+1)$. Meanwhile for $\ell +1\leq k \leq 2\ell+1$, $(i_1,i_2,\ldots, i_k)$ contains at least one component to be $1$, which devotes $\tilde f_{i_1}\tilde f_{i_2}\ldots \tilde f_{i_k}=0$. Thus, the summation in 

\be\label{f2l+1}
f_{2\ell+1}=c_{2\ell+1}=\sum _{\mathbf {i} \in {\mathcal {D}}_{2\ell +1}}\tilde b_{k}\tilde f_{i_{1}}\tilde f_{i_{2}}\cdots \tilde f_{i_{k}}
\ee
over the index set ${\mathcal {D}}_{2\ell +1}$ can be reduced to the summation over the index set 
\[
{\mathcal D}_{2\ell+1,\ell}=\{{\mathbf{i}}=(i_{1},i_{2},\dots ,i_{k})\,:\ 1\leq k\leq \ell,\ i_{1}+i_{2}+\cdots +i_{k}=2\ell+1, i_1, i_2,\ldots, i_k\not=0 \}.
\]
Consequently, from \eqref{f2l+1} we obtain 
\bns
f_{2\ell+1}
&&=\sum _{\mathbf {i} \in {\mathcal {D}}_{2\ell +1,\ell}}\tilde b_{k}\tilde f_{i_{1}}\tilde f_{i_{2}}\cdots \tilde f_{i_{k}}\\
&& =\tilde b_1\tilde f_{2\ell +1}+\sum _{\mathbf {i} \in {\mathcal {D}'}_{2\ell +1,\ell }}\tilde b_{k}\tilde f_{i_{1}}\tilde f_{i_{2}}\cdots \tilde f_{i_{k}}\\
&& =b_0 f_{2\ell}+\sum _{\mathbf {i} \in {\mathcal {D}'}_{2\ell +1,\ell}}\tilde b_{k}\tilde f_{i_{1}}\tilde f_{i_{2}}\cdots \tilde f_{i_{k}},
\ens
where $b_0=f_2$ based on \eqref{1.3-2}. Thus if $(g,f)$ has a $B$-sequence, then we must have \eqref{eq:2.1-2}. 

Conversely, if \eqref{eq:2.1-2} and \eqref{eq:2.2-2} hold, then the Riordan matrix $(g,f)$ possesses a $B$-sequence, which can be evaluated by using \eqref{eq:2.2-2}. 
\end{proof}
\eop

\noindent{\bf Example 2.2} Consider {\it RNA matrix} (see Nkwanta \cite{Nkw} and Cheon, Jin, Kim, and Shapiro \cite{CJKS})

\[
R^\ast=(g,f)=( g(t), tg(t)),
\]
where $g(t)$ is given by \eqref{RNA}. Cameron and Nkwanta \cite{CN} show that 

\[
R^\ast =C_0^{-1}PC_0,
\]
where 

\[
C_0=\left( C(t^2), tC(t^2)\right)=\left( \frac{1-\sqrt{1-4t^2}}{2t^2}, 
\frac{1-\sqrt{1-4t^2}}{2t}\right),
\]
and
\[
P=\left( \frac{1}{1-t}, \frac{t}{1-t}\right).
\]
Hence, 

\[
C_0^{-1}=\left( \frac{1}{1+t^2}, \frac{t}{1+t^2}\right).
\]
In addition, the RNA matrix $R$ shown in Example 2.1 is related to $R^\ast$ in the sense 

\[
R=A_0^{-1}R^\ast A_0=A_0^{-1}C_0^{-1}PC_0A,
\]
where 

\[
A_0=\left( \frac{1}{1-t}, t\right), \quad \mbox{and}\quad A_0^{-1}=(1-t,t).
\]

The RNA matrix $R^\ast$ begins 

\[
R^\ast =\left[\begin{array}{cccccccc}
1 &  &  &  &  &  &  &\\ 
1 & 1 &  &  &  &  &  &\\ 
1 & 2 & 1 &  &  &  &  &\\ 
2 & 3 & 3& 1 &  &  & &\cdots  \\ 
4 &6 & 6 & 4& 1 &  & & \\ 
8 & 13 & 13 & 10& 5 & 1 &&  \\ 
17 & 28 & 30 & 24& 15 & 6 &  1&\\ 
&  &  & \cdots  &  &  & &
\end{array}
\right] 
\]
The elements in the leftmost column are the number of possible RNA secondary structures on a chain of length $n$, while other elements of the matrix count such chain with $k$ vertices designated as the start of a yet to be complete link. It is easy to check that $a_{0}=1$ and $a_{2}=0$, and, as in Eample 2.1, we have the $B$-sequence $B=( 1,1,1,\ldots)$. 

Consider another RNA type matrix 

\[
R^{\ast\ast}=(d(t),h(t))=\left( d(t), tg(t)\right),
\]
where $g(t)$ is defined before, $h=tg$, and 

\[
d(t)=\frac{g(t)-1}{t}= \frac{1-t-t^{2}-\sqrt{1-2t-t^2-2t^3+t^4}}{2t^{3}}.
\]
We call $R^{\ast\ast}$ an RNA type matrix because the elements in the left hand column are the number of possible RNA secondary structures on a chain of length $n$ except $n=0$.  The matrix $R^{\ast\ast}$ begins (see Nkwanta \cite{Nkw}) 

\[
R^{\ast\ast}=\left[\begin{array}{cccccccc}
1 &  &  &  &  &  &  &\\ 
1 & 1 &  &  &  &  &  &\\ 
2 & 2 & 1 &  &  &  &  &\\ 
4& 4 & 3& 1 &  &  & &\cdots  \\ 
8 &9 & 7 & 4& 1 &  & & \\ 
17 & 20 & 17& 11& 5 & 1 &  &\\ 
37 & 45& 41 & 29& 16 & 6 &  1&\\ 
&  &  & \cdots  &  &  & &
\end{array}
\right] 
\]
It is easy to check that $a_{0}=1$ and $a_{2}=0$, and it has a $B$-sequence $B=(1,1,1,\ldots)$ for all elements except 
those in the first column. The $Z$-sequence of $R^{\ast\ast}$ is $Z=(1,1,0,\ldots)$, which presents 

\[
d_{n+1,0}=d_{n,0}+d_{n,1},
\]
where $d_{n,0}$ is the number of secondary structure for $n$ points, and 

\[
d_{n,1}=\sum^{n-2}_{k=0}d_{k,0}d_{n-k-1}.
\]
The above equation has an analogue of Catalan matrix (see, for example, Stanley \cite{Sta} and \cite{He12}). Hence, $R^{\ast\ast}$ has type-I $B$-sequence, but no type-II $B$-sequence. 
\medbreak

\section{$Z$-sequence and type-II $B$-sequence of Riordan matrices }
We say the sequence $\hat B=(\hat b_0,\hat b_1, \hat b_2,\ldots)$ is a type-II $B$-sequence of a Riordan matrix $(g,f)=(d_{n,k})_{n,k\geq 0}$ if it satisfies 
\be\label{6.4}
d_{n+1,0}=\sum_{j\geq 0}\hat b_jd_{n-j,j}
\ee
for $n\geq 0$, i.e., for all entries except the first one in the first column of $(g,f)$. Obviously, a Riordan matrix may have no any type $B$-sequence, or has only one type $B$-sequence, or has both type $B$-sequences which are different. For instance, RNA matrix $R$ shown in Example 2.1 has type-I $B$-sequence $(1,1,1,\ldots)$, but no type-II $B$-sequence.  From the definition of type-II $B$-sequence, we immediately learn that its existence can be characterized by the $Z$-sequence of the Riordan matrix.  Here are some equivalent forms of \eqref{6.4} related to the $Z$-sequence of the Riordan matrix possessing type-II $B$-sequence. 

\begin{proposition}\label{pro:6.2}
If a Riordan matrix $(g,f)$ possesses a type-II $B$-sequence defined by \eqref{6.4}, then we have 

\bn
&&g=1+tg\hat B(tf),\label{6.4-2}\\
&&Z(f)=\hat B(tf).\label{6.4-3}\\
&&Z(t)=\hat B(t\bar f(t)),\label{6.4-4}
\en
where $\hat B(t)$ is the generating function of the type-II $B$-sequence $\hat B=(\hat b_0, \hat b_1,\hat b_2,\ldots)$.
\end{proposition}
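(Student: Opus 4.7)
The plan is to convert the coefficient identity \eqref{6.4} into a generating-function identity, then compare with the existing $Z$-sequence identity \eqref{eq:1.4} to extract the second and third formulas.

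First I would translate \eqref{6.4} into generating functions. Since $d_{n+1,0}=[t^{n+1}]g$ and $d_{n-j,j}=[t^{n-j}]gf^j$ (both equal $0$ when the exponent is negative), multiplying \eqref{6.4} by $t^{n+1}$ and summing over $n\geq 0$ gives
\[
\sum_{n\geq 0}[t^{n+1}]g\,t^{n+1}=\sum_{j\geq 0}\hat b_j\sum_{n\geq j}[t^{n-j}]gf^{j}\,t^{n+1}.
\]
The left-hand side is $g(t)-1$ (using the standing hypothesis $g_0=1$). For the right-hand side, I would reindex by setting $m=n-j$ in the inner sum, pulling out the factor $t^{j+1}$, and recognizing the remaining series as $gf^j$:
\[
\sum_{j\geq 0}\hat b_j\,t^{j+1}\sum_{m\geq 0}[t^m](gf^j)\,t^{m}=tg(t)\sum_{j\geq 0}\hat b_j(tf(t))^{j}=tg(t)\hat B(tf(t)).
\]
This yields \eqref{6.4-2}.

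Next, to get \eqref{6.4-3}, I would invoke the $Z$-sequence identity \eqref{eq:1.4}, which can be rewritten as $g(t)-1=tg(t)Z(f(t))$. Combining with the identity just derived gives
\[
tg(t)Z(f(t))=tg(t)\hat B(tf(t)),
\]
and cancelling the common factor $tg(t)$ (legitimate in the ring of formal power series since $g_0=1$ and hence $tg$ has order $1$, or simply by equating $[t^n]$-coefficients after dividing out) yields $Z(f(t))=\hat B(tf(t))$.

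Finally, \eqref{6.4-4} follows by substituting $t\mapsto\bar f(t)$ into \eqref{6.4-3}: since $f(\bar f(t))=t$, the left side becomes $Z(t)$ and the right side becomes $\hat B(\bar f(t)\cdot t)=\hat B(t\bar f(t))$. The only real care needed anywhere in the argument is the index-shifting manipulation in the first step, where the boundary $n\geq j$ must be handled to justify reindexing; once that is done, the remaining passages are purely algebraic rearrangements inside the formal power series ring.
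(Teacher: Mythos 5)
Your proposal is correct and follows essentially the same route as the paper: both turn \eqref{6.4} into the generating-function identity $g-1=tg\hat B(tf)$, then use \eqref{eq:1.4} in the form $Z(f)=(g-1)/(tg)$ to obtain \eqref{6.4-3}, and substitute $t=\bar f(t)$ for \eqref{6.4-4}. Your treatment of the reindexing and the cancellation of $tg$ is, if anything, slightly more careful than the paper's one-line version.
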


\begin{proof}
From \eqref{6.4}, we have 

\[
[t^{n+1}]g=\sum_{j\geq 0} \hat b_j [t^{n-j}]gf^j=[t^{n+1}]tg\sum_{j\geq 0}\hat b_j(tf)^j
\]
for $n\geq 0$. Hence, \eqref{6.4-2} holds. From \eqref{6.4-2} and \eqref{eq:1.4} and noticing $g_0=1$, we obtain 

\[
\hat B(tf)=\frac{g-1}{tg}=\frac{1-1/g}{t}=Z(f),
\]
or expression \eqref{6.4-3}. Equation \eqref{6.4-4} follows after a substitution $t=\bar f$ applied in \eqref{6.4-3}.
\end{proof}
\eop

From the definition of Bell type Riordan matrices $(g,f)=(g, tg)$, it seems that its two type $B$-sequences, if exist, should have certain relationship. It is indeed true. More precisely, we will have the following result. 

\begin{proposition}\label{pro:6.1}
Let $(g,f)$ be a Riordan matrix. Then it is a Bell type Riordan matrix, i.e., $f=tg$, if and only if it either has the same type-I and type-II $B$-sequence or has no $B$-sequence. 
\end{proposition}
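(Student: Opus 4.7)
The plan is to use the two functional-equation characterizations already at hand: by Proposition \ref{pro:2.1}, a type-I $B$-sequence $B$ satisfies $f - t = tf\,B(tf)$ (equation \eqref{1.2}), and by Proposition \ref{pro:6.2}, a type-II $B$-sequence $\hat B$ satisfies $g - 1 = tg\,\hat B(tf)$ (equation \eqref{6.4-2}). Both directions should then reduce to elementary manipulation of these two relations.

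For the forward direction, I would assume $f = tg$ and multiply the type-II equation $g - 1 = tg\,\hat B(tf)$ through by $t$ to obtain $tg - t = t\cdot tg\cdot \hat B(tf) = tf\,\hat B(tf)$; since $f = tg$, this is exactly the type-I equation with $B = \hat B$. The converse substitution is identical. Hence for a Bell matrix the two existence conditions coincide, and whenever both hold the sequences must agree; otherwise neither exists. For the backward direction, suppose both $B$-sequences exist with $B = \hat B$. Then
\[
f - t = tf\,B(tf) \quad \text{and} \quad g - 1 = tg\,B(tf),
\]
and multiplying the first by $g$ and the second by $f$ produces the common right-hand side $tfg\,B(tf)$, so $g(f-t) = f(g-1)$, which collapses to $f = tg$.

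The main obstacle is conceptual rather than computational, namely the precise reading of the ``no $B$-sequence'' branch of the biconditional. Mutual non-existence of both types does not by itself force Bell; it is included in the disjunction in order to record the fact, established by the forward direction, that a Bell matrix can never sit in the ``asymmetric'' regime where exactly one type exists, or where both exist but differ. The real content of the proposition is thus the characterization of Bell matrices as those avoiding the one-sided behavior exhibited by non-Bell examples such as $R^{\ast\ast}$ in Example 2.2; once the symmetry between \eqref{1.2} and \eqref{6.4-2} under $f = tg$ is spotted, both implications are a few lines of algebra.
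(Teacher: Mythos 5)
Your proposal is correct and follows essentially the same route as the paper: both directions reduce to comparing the functional equations $f-t=tf\,B(tf)$ from \eqref{1.2} and $g-1=tg\,\hat B(tf)$ from \eqref{6.4-2}, with the forward direction showing these two relations transform into one another under $f=tg$ (so the existence conditions coincide and the sequences agree) and the backward direction equating $(f-t)/(tf)=(g-1)/(tg)$ to recover $f=tg$. Your explicit remark on how to read the ``no $B$-sequence'' branch matches the paper's own reformulation of the statement at the start of its proof, so nothing further is needed.
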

\begin{proof}
The proposition statement can be written as the following equivalent form, which provides a possible way to prove the proposition: 
Let $(g,f)$ be a Riordan matrix with a type-I (or type-II) $B$-sequence, $B=(b_0,b_1,\cdots )$ (or $\hat B=(\hat b_0,\hat b_1,\cdots ))$, defined by \eqref{6.5-2} (or \eqref{6.4}). Then $(g,f)$ is a Bell type Riordan matrix, i.e., $f=tg$, if and only if $(g,f)$ possesses a type-II (or type-I) $B$-sequence $\hat B$ (or $B$) defined by \eqref{6.4} (or \eqref{6.5-2}) with $B=\hat B$. It is sufficient to prove the last statement by considering type-I $B$-sequence. The case of type-II $B$-sequence can be proved with a similar argument. Let $(g,f)=(g,tg)$ has a typr-I $B$-sequence. Then from \eqref{1.2}, $f=t+tfB(tf)$, and $f=tg$. Thus, 

\be\label{6.2}
g=1+tgB(t^2g).
\ee
From \eqref{6.2}, we obtain $g(0)=1$, and from Proposition \ref{pro:6.2}, we know type-II $B$-sequence $\hat B$ exists and $\hat B=B$. Since the two type $B$-sequences are the same, the $B$-sequence characterizes all the entries of the Riordan matrix $(g,tg)$. 

Conversely, suppose a Riordan matrix $(g,f)$ has a $B$-sequence satisfying \eqref{6.4} for its first column entries and \eqref{6.5-2} for its other entries. Then from \eqref{1.2} and \eqref{6.4-2} and noticing $B=\hat B$, we must have 
\[
B(tf)=\frac{f-t}{tf}=\frac{g-1}{tg},
\]
which implies $f=tg$, i.e., $(g,f)$ is a Bell Riordan matrix. 
\end{proof}
\eop

\medbreak
\noindent{\bf Example 3.1} The RNA type matrices and the RNA matrix presented in Examples 2.1 and 2.2 have the same type-I $B$-sequence $(1,1,1,\ldots)$, while $R$ and $R^{\ast\ast}$ have no type-II $B$-sequence and $R^\ast$ has type-II $B$-sequence $(1,1,1,\ldots)$. It is easy to see that the Riordan matrix $(1/(1-2t), t/(1-t))$ has two different types $B$-sequences, $B=(1,0,0,\ldots)$ and $\hat B=(2,0,0,\ldots)$. 
The Riordan matrix $(1/(1-2f-t^2f), f)$, where 
\[
f=\frac{1-t-\sqrt{1-2t+t^2-4t^3}}{2t^2},
\]
has the type-I $B$-sequence $B=(1,1,0,\ldots)$ and the type-II $B$-sequence $\hat B=(2,1,0,\ldots)$. 
\medbreak
As recalled in the Introduction, a Riordan matrix is determined by its $A$-sequence and $Z$-sequence. If a Riordan matrix has a type-I $B$-sequence, then the Riordan matrix is determined by the type-I $B$-sequence and the $Z$-sequence. We now find the $Z$-sequence characterization for type-II $B$-sequences of Riordan matrices, including the existence and computation of type-II $B$-sequences.

\begin{proposition}\label{pro:6.3}
Let $(g,f)$ with $g(0)=1$ be a Riordan matrix $(g,f)$ possessing a type-II $B$-sequence defined by \eqref{6.4}, and let $Z(t)=\sum_{n\geq 0} z_nt^n$ be the generating function of the $Z$-sequence of $(g,f)$. Then $Z'(0)=0$, i.e., $z_1=0$, or equivalently, $g_1^2=g_0g_2$.
\end{proposition}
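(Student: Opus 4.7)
The plan is to read $z_1$ off directly from the relation $Z(t) = \hat B(t\bar f(t))$ supplied by Proposition~\ref{pro:6.2}, exploiting the fact that $t\bar f(t)$ vanishes to order exactly $2$ at the origin.

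First I would invoke the second fundamental theorem of Riordan matrices, which yields $\bar f(t) = t/A(t)$, so that
\[
Z(t) = \hat B\!\left(\frac{t^2}{A(t)}\right).
\]
Because $A(0) = a_0 \neq 0$, the series $t^2/A(t)$ has order exactly $2$ in $t$. Substituting a power series of order $\geq 2$ into $\hat B(u) = \hat b_0 + \hat b_1 u + \hat b_2 u^2 + \cdots$ produces a series whose nonzero coefficients can appear only at degree $0$ and at degrees $\geq 2$; in particular the coefficient of $t^1$ vanishes, giving $z_1 = 0$.

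For the equivalence with $g_1^2 = g_0 g_2$, I would expand both sides of $g(t) = 1/(1-tZ(f(t)))$ in Taylor series, using $g_0 = 1$, $f(0) = 0$, and $f_1 = a_0$. A short calculation yields $g_1 = z_0$ and $g_2 = z_0^2 + a_0 z_1$, so $g_1^2 - g_0 g_2 = -a_0 z_1$; since $a_0 \neq 0$ for any proper Riordan matrix, this vanishes precisely when $z_1 = 0$. No step is a real obstacle---the argument reduces to an order-of-vanishing check in one formulation and a two-coefficient match in the other. An equally short alternative would be to use the functional equation $g = 1 + tg\hat B(tf)$ directly: since $tf$ has order $2$, the term $\hat b_1 tf$ contributes nothing at degree $\leq 2$, so comparing $[t^1]$ and $[t^2]$ yields $g_1 = \hat b_0$ and $g_2 = \hat b_0 g_1$, whence $g_1^2 = g_2 = g_0 g_2$.
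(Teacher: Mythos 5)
Your argument is correct and takes essentially the same route as the paper: the paper compares the coefficient of $t$ in $Z(f)=\hat B(tf)$ (rather than in $Z(t)=\hat B(t\bar f(t))$), using that $tf$ has order $2$ to get $z_1f_1=0$ with $f_1\neq 0$, and it obtains the equivalence $z_1=0\iff g_1^2=g_0g_2$ by expanding $Z(f)=\frac{1}{t}\left(1-\frac{1}{g}\right)$ to second order, which is just the inverse of your expansion of $g=1/(1-tZ(f))$. No gaps.
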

\begin{proof}
From \eqref{6.4-3}, $(g,f)$ has a type-II $B$-sequence defined by \eqref{6.4} implies that $z_1=0$, or equivalently, 
\[
Z'(0)=z_1=0
\]
because comparing the coefficient of $t$ on the both sides of \eqref{6.4-3} yields $z_1f_1=0$ and $f_1\not=0$. From \eqref{eq:1.4}, we have 

\[
Z(f)=\frac{g-1}{tg}=\frac{1}{t}\left( 1-\frac{1}{g}\right).
\]
Hence, 

\[
z_0+z_1f+z_2 f^2+\cdots= t^{-1}\left( 1-\left( \frac{1}{g_0}-\frac{g_1}{g_0^2} t+\left( \frac{g_1^2}{g_0^3}-\frac{g_2}{g_0^2}\right)t^2+\cdots\right)\right).
\]
Then $z_1=0$ is equivalent to 

\[
\frac{g_1^2}{g_0^3}-\frac{g_2}{g_0^2}=0,
\]
which is $g_1^2=g_0g_2$.
\end{proof}

\begin{theorem}\label{thm:6.2}
Let $Z=\sum_{j\geq 0}z_jt^j$ be the generating function of the $Z$-sequence, $(z_j)_{j\geq 0}$, of a Riordan matrix $(g,f)$, and let $\bar f=\sum_{j\geq 1}\bar f_j t^j$ be the compositional inverse of $f$.  Then $(g,f)$ possesses a type-II $B$-sequence $\hat B=( \hat b_0, \hat b_1, \ldots)$ defined by \eqref{6.4}  if and only if $\hat b_0=z_0$, $z_1=0$, and for $\ell \geq 1$ 

\bn\label{eq:2.1-3}
&&z_{2\ell+1} =\sum _{\mathbf {i} \in {\mathcal {D}}_{2\ell +1,\ell}} \hat b_{k}\bar f_{i_{1}-1}\bar f_{i_{2}-1}\cdots \bar f_{i_{k}-1},
\en
where the index set is 

\[
{\mathcal {D}}_{2\ell +1,\ell}=\{{\mathbf {i}}=(i_{1},i_{2},\dots ,i_{k})\,:\ 1\leq k\leq \ell ,\ i_{1}+i_{2}+\cdots +i_{k}=2\ell +1,i_1,i_2,\ldots, i_k\not= 0\},
\]
and the summation on the left-hand side of equation \eqref{eq:2.1-3} is a function of $\hat b_{j}$ for $1\leq j\leq \ell -1$ and $\bar f_j$ for $0\leq j\leq 2\ell$. Here for $\ell\geq 1$ $\hat b_\ell$ satisfy 

\be\label{eq:2.2-3}
\hat b_{\ell}=f_1^\ell\left(z_{2\ell }-\sum _{\mathbf {i} \in {\mathcal {D}}_{2\ell, \ell-1}} {\hat b}_{k}\bar f_{i_{1}-1}\bar f_{i_{2}-1}\cdots \bar f_{i_{k}-1}\right),
\ee
where ${\mathcal {D}}_{2,0}=\phi$ and for $\ell\geq 2$, 

\[
{\mathcal {D}}_{2\ell,\ell-1 }=\{{\mathbf {i}}=(i_{1},i_{2},\dots ,i_{k})\,:\ 1\leq k\leq \ell -1, \ i_{1}+i_{2}+\cdots +i_{k}=2\ell,i_1,i_2,\ldots, i_k\not= 0\}.
\]
The summation of the right-hand side of \eqref{eq:2.2} is a function of $\hat b_j$ for $0\leq j\leq \ell-1$ and $\bar f_j$ for $0\leq j\leq 2\ell-1$. 

Furthermore,  the type -II $B$-sequence $\hat B=(\hat b_1,\hat b_2,\hat b_3,\ldots)$ can be evaluated by using \eqref{eq:2.2-3}, where $z_{2\ell}$, $\ell \geq 1$, are arbitrary. Thus, we have 

\bns
&&\hat b_0=z_0,\quad z_1=0,\quad \hat b_1=f_1z_2,\\
&&z_3=\hat b_1\bar f_2, \quad \hat b_2=f_1^2\left( z_4-\hat b_1\bar f_3\right),\\
&&z_5=\hat b_1 \bar f_4+2\hat b_2\bar f_1\bar f_2,
\ens
etc.
\end{theorem}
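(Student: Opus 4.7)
The plan is to mirror the strategy used in the proof of Theorem \ref{thm:1.1-2}, but starting from the identity \eqref{6.4-4} of Proposition \ref{pro:6.2} rather than \eqref{1.5}. Specifically, since $(g,f)$ possessing a type-II $B$-sequence is equivalent to $Z(t)=\hat B(t\bar f(t))$, the strategy is to expand the right-hand side using Fa\`a di Bruno's formula and match coefficients with $Z(t)=\sum_{j\geq 0}z_jt^j$.

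First I would write $t\bar f(t)=\sum_{j\geq 2}\bar f_{j-1}t^j$ and set $\tilde{\bar f}_0=\tilde{\bar f}_1=0$, $\tilde{\bar f}_j=\bar f_{j-1}$ for $j\geq 2$, so that $t\bar f(t)=\sum_{j\geq 0}\tilde{\bar f}_j t^j$ is a formal power series of order $\geq 2$. Applying Fa\`a di Bruno's formula as in \eqref{0.0}--\eqref{0.00-1} to the composition $\hat B\circ(t\bar f)$ yields
\[
\hat B(t\bar f(t))=\sum_{n\geq 0}e_n t^n,\qquad e_0=\hat b_0,\quad e_n=\sum_{\mathbf{i}\in\mathcal{D}_n}\hat b_k\,\tilde{\bar f}_{i_1}\tilde{\bar f}_{i_2}\cdots\tilde{\bar f}_{i_k}\ (n\geq 1).
\]
Equating $z_n=e_n$ immediately gives $\hat b_0=z_0$ and $z_1=\hat b_1\tilde{\bar f}_1=0$, recovering the conditions from Proposition \ref{pro:6.3}.

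The core of the proof is the pigeonhole reduction: since $\tilde{\bar f}_0=\tilde{\bar f}_1=0$, any composition $\mathbf{i}=(i_1,\ldots,i_k)$ with some $i_j\leq 1$ contributes $0$ to $e_n$. For $\mathbf{i}\in\mathcal{D}_n$ with every $i_j\geq 2$ we must have $k\leq\lfloor n/2\rfloor$. Hence for $n=2\ell+1$ the effective index set collapses to $\mathcal{D}_{2\ell+1,\ell}$, yielding \eqref{eq:2.1-3} directly. For $n=2\ell$ the effective index set is $\mathcal{D}_{2\ell,\ell}$, and the unique composition with $k=\ell$ (forced to be $(2,2,\ldots,2)$) contributes $\hat b_\ell\tilde{\bar f}_2^{\,\ell}=\hat b_\ell\bar f_1^{\,\ell}$. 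Isolating this term and solving for $\hat b_\ell$ gives
\[
\hat b_\ell=\bar f_1^{-\ell}\left(z_{2\ell}-\sum_{\mathbf{i}\in\mathcal{D}_{2\ell,\ell-1}}\hat b_k\,\bar f_{i_1-1}\cdots\bar f_{i_k-1}\right),
\]
and since $\bar f_1=1/f_1$ one has $\bar f_1^{-\ell}=f_1^{\ell}$, which is \eqref{eq:2.2-3}. The converse direction is immediate: given \eqref{eq:2.1-3} and \eqref{eq:2.2-3}, reversing the above comparison produces the identity $Z(t)=\hat B(t\bar f(t))$, so $\hat B$ defined by these formulas is indeed a type-II $B$-sequence by Proposition \ref{pro:6.2}. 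Finally, I would verify the low-order formulas $\hat b_1=f_1z_2$, $z_3=\hat b_1\bar f_2$, $\hat b_2=f_1^{\,2}(z_4-\hat b_1\bar f_3)$, $z_5=\hat b_1\bar f_4+2\hat b_2\bar f_1\bar f_2$ by direct substitution as a sanity check.

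The main obstacle I anticipate is purely bookkeeping: correctly handling the index shift in $\tilde{\bar f}_j=\bar f_{j-1}$ (so that the vanishing of $\tilde{\bar f}_1$ translates into compositions of $n$ with each part $\geq 2$), and correctly identifying the isolated term $\hat b_\ell\bar f_1^{\,\ell}$ for even $n$ that produces the leading factor $f_1^{\,\ell}$ in \eqref{eq:2.2-3}. Everything else is parallel to the argument already carried out in Theorems \ref{thm:1.1-2} and \ref{thm:1.1-3}, so the analogy should make the writing essentially mechanical once the coefficient extraction from $Z(t)=\hat B(t\bar f(t))$ is established.
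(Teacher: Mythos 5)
Your proposal is correct and follows essentially the same route as the paper's own proof: both expand $Z(t)=\hat B(t\bar f(t))$ via Fa\`a di Bruno with the shifted coefficients $\bar f_{j-1}$ vanishing for $j\leq 1$, use the pigeonhole observation to truncate the composition index sets to $k\leq\lfloor n/2\rfloor$, and isolate the term $\hat b_\ell\bar f_1^{\,\ell}$ at $n=2\ell$ to solve for $\hat b_\ell$ with the factor $f_1^{\ell}=\bar f_1^{-\ell}$. No substantive differences to report.
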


\begin{proof} From \eqref{6.4-4}, $Z(t)=B(t\bar f)$, we use the Fa\' a di Bruno formula to its right-hand side and compare the coefficients on its both sides. Thus  

\be\label{6.6} 
z_n=\sum _{\mathbf {i} \in {\mathcal {D}}_{n}}\hat b_{k}\bar f_{i_{1}-1}\bar f_{i_{2}-1}\cdots \bar f_{i_{k}-1}=\sum _{\mathbf {i} \in {\mathcal {E}}_{n}}\hat b_{k}\hat {\bar f}_{i_{1}}\hat {\bar f}_{i_{2}}\cdots \hat {\bar f}_{i_{k}},
\ee
where $\hat {\bar f}_j=\bar f_{j-1}$, $\hat {\bar f}_0=\bar f_{-1}=0$, $\hat {\bar f}_1=\bar f_0=0$, $\bar f_n=[t^n] \overline{f}$, and ${\mathcal {D}}_n$ is defined by \eqref{eq:2.1+1}.  It is clear that $z_0=\hat b_0$, 
\[
z_1=\hat b_1\hat {\bar f}_1=0,
\]
and 
\[
z_2=\hat b_1\hat {\bar f}_2+\hat b_2\hat {\bar f}_1^2=\hat b_1\bar f_1,
\]
which yields 
\[
\hat b_1=\frac{z_2}{ \bar f_1}=f_1z_2
\]
because $1/\bar f_1=f_1$. In general, for $n=2\ell+1$ and $\ell\geq 1$, we have 
\bns
z_{2\ell+1}=\sum _{\mathbf {i} \in {\mathcal {D}}_{2\ell+1}}\hat b_{k}\hat {\bar f}_{i_{1}}\hat {\bar f}_{i_{2}}\cdots \hat {\bar f}_{i_{k}}=\sum _{\mathbf {i} \in {\mathcal {D}}_{2\ell+1,\ell}}\hat b_{k}\hat {\bar f}_{i_{1}}\hat {\bar f}_{i_{2}}\cdots \hat {\bar f}_{i_{k}},
\ens
where the last equation is due to the fact of $\hat {\bar f}_{i_{1}}\hat {\bar f}_{i_{2}}\cdots \hat {\bar f}_{i_{k}}$ contains 
at least one factor of $\hat {\bar f}_1=0$ for all $\ell+1\leq k\leq 2\ell+1$. Hence, we obtain \eqref{eq:2.1-3}. To determine 
$\hat b_j$, we substitute $n=2\ell$, $\ell\geq 1$, into \eqref{6.6} to have 

\bns
&&z_{2\ell }=\sum _{\mathbf {i} \in {\mathcal {D}}_{2\ell}}\hat b_{k}\hat {\bar f}_{i_{1}}\hat {\bar f}_{i_{2}}\cdots \hat {\bar f}_{i_{k}}=\hat b_\ell \hat{\bar f}_2^\ell +\sum _{\mathbf {i} \in {\mathcal {D}}_{2\ell,\ell-1}}\hat b_{k}\hat {\bar f}_{i_{1}}\hat {\bar f}_{i_{2}}\cdots \hat {\bar f}_{i_{k}},
\ens
where the last equation is due to the fact of $\hat {\bar f}_{i_{1}}\hat {\bar f}_{i_{2}}\cdots \hat {\bar f}_{i_{k}}$ contains 
at least one factor of $\hat {\bar f}_1=0$ for all $\ell+1\leq k\leq 2\ell$. From the last expression about $z_{2\ell}$ and noticing $\hat{\bar f}_2={\bar f}_1=1/f_1$, we obtain \eqref{eq:2.2-3}. 

Conversely, if \eqref{eq:2.1-3} and \eqref{eq:2.2-3} hold, one may derive \eqref{6.4-4}, i.e., the Riordan matrix $(g,f)$ possessing the $Z$-sequence has a type-II $B$-sequence, where the type-II $B$-sequence can be constructed by using \eqref{6.4}. 
\end{proof}
\eop

\section{Subgroups of Riordan group characterized by $A$- and $Z$-sequences}
We now discuss the subgroups of the Riordan group defined by the $A$-sequence and $Z$-sequence of Riordan matrices.

\begin{theorem}\label{thm:1.3}
The set of the Riordan matrices with $A$-sequences of the form $(1,a_1,0,a_3,\ldots)$, denoted by $R_{0,2}$, is a subgroup of the Riordan group. 
\end{theorem}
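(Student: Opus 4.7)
The plan is to verify the three subgroup axioms using the alternative characterization supplied by Theorem \ref{thm:1.1}: a Riordan matrix $(g,f)$ lies in $R_{0,2}$ if and only if $f_1=1$ and $f_3=f_2^2$. Working on the $f$-component bypasses expanding $1/A(t)$ explicitly via \eqref{1.6-0} and reduces each axiom to a short series computation.

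First, the identity $(1,t)$ has $f(t)=t$, so $f_1=1$ and $f_2=f_3=0$; the defining relation $f_3=f_2^2$ holds trivially, so $I\in R_{0,2}$.

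For closure under the product \eqref{Proddef}, I would take two matrices $(g_i,\varphi_i)\in R_{0,2}$, $i=1,2$, write $\varphi_i(t)=t+\alpha_i t^2+\alpha_i^2 t^3+O(t^4)$, and compute
\[
\varphi_2(\varphi_1(t))=t+(\alpha_1+\alpha_2)t^2+(\alpha_1+\alpha_2)^2 t^3+O(t^4).
\]
The $f$-component of the product again has its $t^3$-coefficient equal to the square of its $t^2$-coefficient, so the product lies in $R_{0,2}$.

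For closure under inversion \eqref{Invdef}, I would take $(g,\varphi)\in R_{0,2}$ with $\varphi(t)=t+\alpha t^2+\alpha^2 t^3+O(t^4)$, set $\overline{\varphi}(t)=t+\gamma t^2+\delta t^3+O(t^4)$, and solve $\overline{\varphi}(\varphi(t))=t$ order by order: the $t^2$-coefficient forces $\gamma=-\alpha$, and then the $t^3$-coefficient forces $\delta=\alpha^2=\gamma^2$. Thus $\overline{\varphi}$ satisfies the defining relation and the inverse $(1/g(\overline{\varphi}),\overline{\varphi})$ lies in $R_{0,2}$.

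The only obstacle here is notational bookkeeping; once $a_2=0$ is translated into the single identity $f_3=f_2^2$, each of the three verifications collapses to a handful of lines. An alternative approach working directly on the $A$-sequence generating functions via \eqref{1.7} and \eqref{1.8} combined with \eqref{1.6-0} is also feasible, but it requires more algebraic manipulation to arrive at the same cancellations.
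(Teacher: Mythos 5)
Your proof is correct, but it takes a genuinely different route from the paper's. The paper works on the $A$-sequence side: for closure under products it invokes the composition formula \eqref{1.7}, $A_3(t)=A_2(t)A_1\bigl(t/A_2(t)\bigr)$, expands $t/A_2(t)$ via \eqref{1.6-0}, and checks that the coefficient of $t^2$ cancels; for inverses it uses $A^\ast(t)=t/f(t)$ and expands. You instead translate the condition $a_0=1,\ a_2=0$ into $f_1=1,\ f_3=f_2^2$ and verify everything on the $f$-component, where the group operations become ordinary composition and compositional inversion of power series. This buys a shorter and more transparent calculation --- the identity $(\alpha_1+\alpha_2)^2=\alpha_1^2+2\alpha_1\alpha_2+\alpha_2^2$ does all the work for closure, and solving $\overline{\varphi}(\varphi(t))=t$ order by order does it for inverses --- at the cost of first establishing the dictionary between the two conditions. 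One small point to make explicit: the equivalence you import from Theorem \ref{thm:1.1} should be justified for \emph{arbitrary} Riordan matrices, not only those with a type-I $B$-sequence; this follows from $f=tA(f)$, which gives $f_1=a_0$, $f_2=a_1f_1$, $f_3=a_1f_2+a_2f_1^2$, so that $a_0=1,\ a_2=0$ holds if and only if $f_1=1,\ f_3=f_2^2$ (the paper's proof of Theorem \ref{thm:1.1} only records the forward direction under the hypothesis $a_2=0$). With that one line added, your argument is complete, and it is in fact closer in spirit to the paper's treatment of the inverse (which also passes to the coefficients of $f$) than to its treatment of the product.
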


\begin{proof}
If $D_{1}$ and $D_{2}$ are in $R_{0,2}$, then the generating functions of their $A$-sequences are 

\[
A_1(t)=a_{1,0}+a_{1,1}t+a_{1,3}t^3+\cdots
\]
and 

\[
A_2(t)=a_{2,0}+a_{2,1}t+a_{2,3}t^3+\cdots.
\]
From \eqref{1.6-0}, and noting $a_{2,2}=0$, we have 

\[
\frac{t}{A_2(t)}=t\hat C(t)=t(1-a_{2,1}t+a_{2,1}^2t^2-(a_{2,1}^3+a_{2,3})t^3+\cdots.
\]
Thus the generating function $A_3(t)$ of the $A$-sequence of $D_3=D_1D_2$ is 

\bns
A_3(t)&=&A_2(t)A_1\left( \frac{t}{A_2(t)}\right)\\
&=&(a_{2,0}+a_{2,1}t+a_{2,3}t^3+\cdots)(a_{1,0}+a_{1,1}t(1-a_{2,1}t+a_{2,1}^2t^2-(a_{2,1}^3+a_{2,3})t^3
+\cdots)\\
&&\quad +a_{1,3}t^3(1-a_{2,1}t+a_{2,1}^2t^2-(a_{2,1}^3+a_{2,3})t^3+\cdots)^3\cdots\\
&=&a_{1,0}a_{2,0}+a_{1,0}a_{2,1}t+a_{1,1}(a_{2,0}+a_{2,1}t)(-a_{2,1}t+a_{2,1}^2t^2)+ct^3+\cdots\\
&=&1+a_{2,1}t+a_{1,1}(1+a_{2,1}t)(-a_{2,1}t+a_{2,1}^2t^2+ct^3+\cdots=1+a_{2,1}t+ct^3+\cdots,
\ens
which implies that $D_3\in R_{0,2}$. 

If $D\in R_{0,2}$, then the generating function, $A^\ast(t)$ of the $A$-sequence of the inverse $D^\ast$ of $D$ is 

\[
A^\ast =\frac{t}{f(t)}=\frac{t}{t+f_2t^2+f_2^2t^3+\cdots}=1-f_2t+c't^3+\cdots,
\]
which refers to that $D^\ast\in R_{0,2}$. The proof is complete.
\end{proof}

\medbreak

\noindent{\bf Remark 4.1} Luz\'on, Mor\'on, and Prieto-Martinez \cite{LMPM} show that all Riordan matrices with $A$-sequence $(a_0,a_1,0, a_3,\ldots)$ form a subgroup of the Riordan group. Hence, $R_{0,2}$ is a subgroup of their subgroup. 

\medbreak
\noindent{\bf Remark 4.2} Here is an alternative proof of Theorem \ref{thm:1.3}, which is based on a concept of truncation class of formal power series. 

More precisely, let $f=\sum_{j\geq 0} f_j t^j$ and $h=\sum_{j\geq 0}h_jt^j$ be two power series. If there exists an integer $r\geq 0$ such that the $r$-th truncations of $f|_r=\sum^r_{j=0} f_jt^j$ and $h|_r=\sum^r_{j=0}h_jt^j$ satisfying $f|_r\equiv ch|_r$ for some non-zero constant $c$ and $f_{r+1}\not= c h_{r+1}$ for any constant $c\not= 0$, then we say $f$ and $h$ have the same truncation of order $r$. For a fixed power series $f$ and an integer $r\geq 0$, the collection of all power series that possess the same truncation of order $r$ is called a truncation class of order $r$ with respect to $f$. This class is denoted by $T_r(f)$. 

Firstly, let $(g,f)$ be a Riordan matrix, where $f=\sum_{j\geq 1}f_jt^j$, $f_1\not=0$, and let $(a_0,a_1,a_2,\ldots)$ be the $A$-sequence of $(g,f)$. Then, $a_2=0$ and $a_{3}\not=0$  if and only if 

\be\label{3.1}
f_2^2=f_1f_{3}, 
\ee
or equivalently, the truncation of the first $3$ terms of $f$ can be written as

\be\label{3.2}
f|_r=a_0t\frac{1-(a_1t)^2}{1-a_1t},
\ee
i.e., $f$ is in $T_2(t/(1-a_1t))$. In fact, from the second fundamental theorem of Riordan matrices, $f=tA(f)$, where $f=\sum_{j\geq 1}f_jt^j$ with $f_1\not=0$ and $A(t)=\sum_{i\geq 0} a_it^i$, we have 

\bn\label{3.3}
&&\sum_{j\geq 0}f_jt^j=a_0t+a_1t\sum_{j\geq 1}f_jt^j +a_2t\left(\sum_{j\geq 1}f_jt^j\right)^2+a_3t\left(\sum_{j\geq 1}f_jt^j\right)^3+\cdots .
\en
Thus, 

\[
a_0=f_1\not=0\quad \mbox{and} \quad a_1=\frac{f_2}{f_1}.
\]
If $a_2=0$, then 

\[
f_{3}=a_1f_{2}.
\]
Consequently, 
\[
\frac{f_{3}}{f_{2}}=\frac{f_2}{f_1}=a_1,
\]
which implies $f_1f_{3}=f_2^2$,  $f_{3}=f_2^2/f_1$, and  

\[
f_{3}=a_1^2f_1.
\]
Hence, the first $3$ terms of $f$ is 

\[
\sum_{j=1}^2f_jt^j=f_1t\sum^{1}_{j=0}a_1^jt^j=f_1t\frac{1-(a_1t)^2}{1-a_1t}.
\]
In other word, $f$ is in $T_2(t/(1-a_1t))$ because 

\[
f|_2=\left. a_0\frac{t}{1-a_1t}\right|_2.
\]

Conversely, if \eqref{3.2}, or equivalently, \eqref{3.1} hold for $j=2$, then from \eqref{3.3} we have 

\[
a_1f_2=f_{3}=a_1 f_2+a_2f_1,
\]
which implies $a_2=0$ due to $f_1\not= 0$. Secondly, let $(g,f)$ be a Riordan matrix with $A$-sequence $(a_0, a_1,\ldots)$, and let $f|_2=a_0t/(1-a_1t)|_2$. Then the compositional inverse of $f$, $\bar f$, has the truncation of order $2$ of the form  

\be\label{3.4}
\bar f|_2=\left. \frac{t}{a_0+a_1t}\right|_2.
\ee
If $f|_2=a_0t/(1-a_1t)|_2$, then $\bar f$ is the compositional inverse of $f$ if and only if $\bar f|_2$ can be presented by \eqref{3.4} and $\bar f=t/(a_0+a_1t)$. In fact, $f\circ \bar f=t$ implies $(f\circ \bar f)|_2=t$. If $f$ has the truncation of the first $2$ terms presented by $f|_2=a_0t/(1-a_1t)|_2$, then a straightforward process can be applied to solve $\bar f|_2$ shown in \eqref{3.4} from the equation $(f\circ \bar f)|_2=t$. If $f|_2=a_0t/(1-a_1t)|_2$, then $\bar f$ possesses the truncation of order $2$ shown in \eqref{3.4}. Thus $\bar f=t/(a_0+a_1t)$. Conversely, if $\bar f$ satisfies \eqref{3.4}, i.e., $\bar f=t/(a_0+a_1t)$, then $f=a_0t/(1-a_1t)$, which implies $f|_2=a_0t/(1-a_1t)|_2$. 

Finally, we show that the set of Riordan matrices, denoted by $R_{0,2}$ with $A$-sequences, $(a_{0}, a_{1},$ $a_{2},\ldots)$, satisfying $a_{0}=1$ and $a_{2}=0$ forms a subgroup of the Riordan group.

Let $(g_1,f_1)$ and $(g_2,f_2)$ be two Riordan matrices with $A$-sequences $A_1$ and $A_2$, and let 
$(g_3,f_3)=(g_1,f_1)(g_2,f_2)$ with $A$-sequence $A_3$. From the Second Fundamental Theorem of Riordan Arrays, $f=tA(f)$, we have $t=\bar f A$. Thus we may rewrite \eqref{1.7} as 

\[
A_3(t)=A_2(t)A_1\left( \bar f_2\right).
\]
If $(g_1,f_1)$ and $(g_2,f_2)\in R_{0,2}$, then 

\[
A_i(t)=a_{i,0}+a_{i,1}t+a_{i,3}t^{3}+\cdots=a_{i,0}+a_{i,1}t+O(t^{3})
\]
for $i=1$ and $2$. Hence, 
\[
f_2|_2=\left. \frac{a_{2,0}t}{1-a_{2,1}t}\right|_2,
\]
which implies 

\[
\bar f_2|_2=\left.\frac{t}{a_{2,0}+a_{2,1}t}\right|_2.
\]
Combining the above equations yields

\bns
A_3(t)&=&(a_{2,0}+a_{2,1}t+O(t^{3}))\left(a_{1,0}+\left.a_{1,1}\frac{t}{a_0+a_1t}\right|_2+O(t^{3})\right)\\
&=&a_{1,0}a_{2,0}+a_{1,0}a_{2,1}t+\left.a_{1,1}(a_{2,0}+a_{2,1}t)\frac{t}{a_0+a_1t}\right|_r +O(t^{3})\\
&=&a_{1,0}a_{2,0}+(a_{1,0}a_{2,1}+a_{1,1})t+O(t^{3})=1+(a_{1,1}+a_{2,1})t+O(t^{3}),
\ens
which implies $(g_3,f_3)=(g_1,f_1)(g_2,f_2)$ is also in $R_{0,2}$, where we use the obvious result 

\[
a_{3,0}=A_3(0)=A_1(0)A_2(0)=1.
\]

\medbreak

\noindent{\bf Remark 4.3} We now give a more direct way to prove Theorem \ref{thm:1.3}. 
If $D_{1}$ and $D_{2}$ are in $R_{0,2}$, then the generating functions of their $A$-sequences satisfy 

\[
A_{1}(0)=A_{2}(0)=1\,\, \mbox{and}\,\, A''_{1}(0)=A''_{2}(0)=0.
\]
From \eqref{1.7}, we have 

\[
A_{3}(0)=A_{2}(0)A_{1}(0)=1
\]
and 

\bns
A_{3}''(t)&&=A''_{2}(t)A_{1}\left( \frac{t}{A_{2}(t)}\right) +2A'_{2}(t)A'_{1}\left( \frac{t}{A_{2}(t)}\right)\left( \frac{t}{A_{2}(t)}\right)'\\
&& +A_{2}(t)\left( A'_{1}(t)\left( \frac{t}{A_{2}(t)}\right)\left( \frac{t}{A_{2}(t)}\right)'\right)'\\
\ens
which implies 

\bns
A_{3}''(0)&&=2A'_{2}(0)A'_{1}(0)\frac{A_{2}(0)}{A^{2}_{2}(0)}+A_{2}(0)A''_{1}(0)\left( \frac{A_{2}(0)}{A^{2}_{2}(0)}\right)^{2}\\
&&+A_{2}(0)A'_{1}(0)( -2)A^{-2}_{2}(0)A'_{2}(0)=0
\ens
Hence, $R_{0,2}$ is closed under the Riordan multiplication. Similarly, we may use \eqref{1.8} to prove that the inverse, $D^{-1}$, of any element $D\in R_{0,2}$ with the $A$-sequence $(1,a_{1}, 0, a_{3},\ldots)$ has the $A^{\ast}$-sequence $(1, a^{\ast}_{1}, 0, a^{\ast}_{3},\ldots)$. Thus $D^{-1}\in R_{0,2}$. More precisely, from \eqref{1.8}, 

\[
A^\ast (0)=\frac{1}{A(0)}=1,
\]
which implies $a^\ast_0=a_0=1$. Taking derivatives on the both sides of \eqref{1.8} yields

\[
(A^\ast)'\left(\frac{t}{A(t)}\right)\frac{A(t)-tA'(t)}{A(t)^2}=-\frac{A'(t)}{A(t)^2}.
\]
Substituting $t=0$ and noting $A(0)=1$, we obtain 

\[
(A^\ast)'(0)=-A'(0).
\]
Taking second derivatives on the both sides of \eqref{1.8}, we have 

\bns
&&(A^\ast)''\left(\frac{t}{A(t)}\right)\left(\frac{A(t)-tA'(t)}{A(t)^2}\right)^2\\
&&+(A^\ast)'\left(\frac{t}{A(t)}\right)\frac{-tA''(t)A(t)^2-2A(t)A'(t)(A(t)-tA'(t))}{A(t)^4}
=\frac{2(A'(t))^2}{A(t)^3}-\frac{A''(t)}{A(t)^2}.
\ens
When $t=0$, one can derive 

\[
(A^\ast)''(0)-2A'(0)(A^\ast)'(0)=2A'(0)^2-A''(0).
\]
Since $(A^\ast)'(0)=-A'(0)$, the above equation can be reduced to 

\[
(A^\ast)''(0)=-A''(0)=0,
\]
which yields $a^\ast_2=0$.
\medbreak

\begin{proposition}\label{pro:1.4}
Let two proper Riordan matrices $D_1=(g_1,f_1)$ and $D_2=(g_2,f_2)$ have type-I $B$-sequences $B_1$ and $B_2$, respectively, then the product of $D_1$ and $D_2$, 

\[
D_3=D_1 D_2=(g_1g_2(f_1), f_2(f_1)),
\]
has type-I $B$-sequence $B_3$ with its generating function

\bns
&&B_3\left( \frac{t^2}{A_3(t)}\right) =B_2\left( \frac{t^2}{A_2(t)}\right)+\frac{1}{A_2(t)}B_1\left( \frac{t^2}{A_2(t)A_3(t)}\right)\\
&&\quad +\frac{t}{A_2(t)}B_2\left( \frac{t^2}{A_2(t)}\right) B_1\left( \frac{t^2}{A_2(t)A_3(t)}\right).
\ens
\end{proposition}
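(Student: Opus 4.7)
The plan is to derive the formula for $B_3$ directly from the defining identity \eqref{1.5} applied to $D_3$, together with the composition law \eqref{1.7} for $A$-sequences. The existence of $B_3$ will follow from the calculation: having written the right-hand side in terms of $B_1$ and $B_2$ (which exist by hypothesis), the equation \eqref{1.5} for $D_3$ is forced, and thus, by Theorem \ref{thm:1.1-2} (read in the reverse direction), $D_3$ admits a type-I $B$-sequence.

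First I would apply \eqref{1.5} to $D_3$ to write
\[
B_3\!\left(\frac{t^2}{A_3(t)}\right)=\frac{A_3(t)-1}{t},
\]
so that the whole task reduces to expanding $A_3(t)-1$. Next I would invoke \eqref{1.7}, which gives $A_3(t)=A_2(t)\,A_1(t/A_2(t))$, and I would split $A_3(t)-1$ by means of the elementary algebraic identity
\[
xy-1=(x-1)+(y-1)+(x-1)(y-1),
\]
setting $x=A_2(t)$ and $y=A_1(t/A_2(t))$. This is the key structural step: the three summands in the stated formula will correspond to these three pieces.

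Then I would replace each piece using \eqref{1.5}. For $x-1$ I get immediately $A_2(t)-1=t\,B_2(t^2/A_2(t))$. For $y-1$, I would apply \eqref{1.5} with the variable $t$ replaced by $t/A_2(t)$; this produces
\[
A_1\!\left(\frac{t}{A_2(t)}\right)-1=\frac{t}{A_2(t)}\,B_1\!\left(\frac{(t/A_2(t))^2}{A_1(t/A_2(t))}\right),
\]
and then using $A_1(t/A_2(t))=A_3(t)/A_2(t)$ (a direct consequence of \eqref{1.7}), the argument of $B_1$ simplifies to $t^2/(A_2(t)A_3(t))$. The cross term $(x-1)(y-1)$ becomes $(t^2/A_2(t))\,B_2(t^2/A_2(t))\,B_1(t^2/(A_2(t)A_3(t)))$.

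Assembling these and dividing by $t$ yields exactly the stated expression for $B_3(t^2/A_3(t))$. The only real subtlety — and the step I expect to require the most care — is the bookkeeping in the substitution $t\mapsto t/A_2(t)$ when applying \eqref{1.5} to $A_1$, in particular the simplification of the argument of $B_1$ via the identity $A_2(t)A_1(t/A_2(t))=A_3(t)$; once this is in place, the whole derivation is a two-line algebraic manipulation. Note that the expression can in fact be collapsed further by noting $1+tB_2(t^2/A_2(t))=A_2(t)$, which shows that $(1/A_2(t))B_1+(t/A_2(t))B_2B_1=B_1$, but the form stated in the proposition is the one that arises naturally from the three-term decomposition above.
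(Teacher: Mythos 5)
Your algebraic derivation of the displayed identity is correct, and it is in substance the paper's own proof: the paper's one-line argument (``substituting \eqref{1.5} into \eqref{1.7}'') amounts to exactly your expansion of $A_2(t)A_1(t/A_2(t))-1$ via $xy-1=(x-1)+(y-1)+(x-1)(y-1)$, together with the simplification $A_1(t/A_2(t))=A_3(t)/A_2(t)$ inside the argument of $B_1$. Your closing observation that the right-hand side collapses to $B_2(t^2/A_2(t))+B_1(t^2/(A_2(t)A_3(t)))$ is also correct.

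The genuine gap is in the existence claim, and it is present both in your write-up and in the paper's. What the computation establishes is that the ordinary power series $(A_3(t)-1)/t$ equals the displayed combination of $B_1$ and $B_2$; it does not establish that this series lies in the image of the substitution $B\mapsto B(t^2/A_3(t))$, i.e.\ that it is a power series in the order-two series $t^2/A_3(t)$. That is precisely what ``$D_3$ has a type-I $B$-sequence'' means, and Theorem \ref{thm:1.1-2} read in reverse does not hand it to you: its sufficient direction requires verifying the constraints \eqref{eq:2.1} on the even-indexed coefficients of $A_3$, which your argument never checks. In fact these constraints can fail. Take $D_1=(1/(1-t),\,t/(1-t))$, with type-I $B$-sequence $(1,0,0,\ldots)$, and $D_2=(1,f_2)$ with $f_2=t+t^2f_2^2$, whose type-I $B$-sequence is $(0,1,0,\ldots)$ since $(f_2-t)/(tf_2)=tf_2$. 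Then $f_3=f_2(t/(1-t))=t+t^2+t^3+2t^4+5t^5+\cdots$, and $(f_3-t)/(tf_3)=1+t^2+2t^3+\cdots$ is not a power series in $tf_3=t^2+t^3+\cdots$ (the coefficients of $t^2$ and $t^3$ would have to coincide). So this $D_3$ has no type-I $B$-sequence at all, and no amount of care in the substitution step can repair this: the proposition needs the additional hypothesis that $D_3$ admits a type-I $B$-sequence, under which your identity does correctly determine $B_3$.
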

\begin{proof}
Substituting \eqref{1.5} into \eqref{1.7}, one may obtain the result.
\end{proof}

Combining Theorems \ref{thm:1.1} and \ref{thm:1.3} together, we immediately have the following result. 

\begin{theorem}\label{thm:1.4}
If a Riordan matrix $(g,f)$ has a type-I $B$-sequence, then it is in the subgroup $R_{0,2}$.  
\end{theorem}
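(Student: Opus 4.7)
The plan is essentially to invoke two results already proved and observe that together they give the statement for free. By Theorem \ref{thm:1.1}, if the Riordan matrix $(g,f)$ possesses a type-I $B$-sequence defined by \eqref{6.5-2}, then the $A$-sequence $(a_0,a_1,a_2,\ldots)$ of $(g,f)$ necessarily satisfies $a_0=1$ and $a_2=0$. By the definition of $R_{0,2}$ introduced right before Theorem \ref{thm:1.3}, this is precisely the condition for $(g,f)$ to belong to $R_{0,2}$.

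Thus the only thing left to do is to recall that $R_{0,2}$ is actually a subgroup of $\mathcal{R}$, which is exactly the content of Theorem \ref{thm:1.3}. So the proof I would write is a single sentence: apply Theorem \ref{thm:1.1} to conclude $a_0=1$ and $a_2=0$ for the $A$-sequence of $(g,f)$, and then note that by definition of $R_{0,2}$ (and by Theorem \ref{thm:1.3}, which guarantees this set is a genuine subgroup rather than merely a subset), we obtain $(g,f)\in R_{0,2}$.

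There is no real obstacle here; the theorem is a direct corollary in disguise, which is why the preceding paragraph in the excerpt explicitly says ``we immediately have the following result.'' The only judgment call is stylistic: whether to restate the chain $a_0=1,\,a_2=0\Longleftrightarrow f_1=1,\,f_3=f_2^2$ from Theorem \ref{thm:1.1} to make clear which equivalent formulation of membership in $R_{0,2}$ is being verified. I would keep the proof to two or three lines and not reprove anything.
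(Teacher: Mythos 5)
Your proposal is correct and matches the paper exactly: the paper offers no separate argument beyond the sentence "Combining Theorems \ref{thm:1.1} and \ref{thm:1.3} together, we immediately have the following result," which is precisely the two-step deduction you describe ($a_0=1$, $a_2=0$ from Theorem \ref{thm:1.1}, membership in $R_{0,2}$ by definition, and subgroup status from Theorem \ref{thm:1.3}).
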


\begin{theorem}\label{thm:6.4}
The set of Riordan matrices, denoted by $R_{1,1,1}$, with $A$-sequences of the form  $(1, a_1, a_2,\ldots)$ and $Z$-sequences of the form $(z_0=a_1,0,z_2,\ldots)$ forms a subgroup of the Riordan group.
\end{theorem}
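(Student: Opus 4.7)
The plan is to establish the three subgroup conditions for $R_{1,1,1}$ by extracting information from the first two Maclaurin coefficients of $A(t)$ and $Z(t)$. Membership in $R_{1,1,1}$ is exactly the three analytic conditions $A(0)=1$, $Z(0)=A'(0)$, and $Z'(0)=0$. The identity $(1,t)$ has $A\equiv 1$ and $Z\equiv 0$, so it lies in $R_{1,1,1}$ trivially; what remains is closure under Riordan multiplication and under inversion, for which I would use formulas \eqref{1.7}, \eqref{1.7-2}, \eqref{1.8}, and \eqref{1.8-2}.

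For closure under products, let $D_i=(g_i,f_i)\in R_{1,1,1}$ for $i=1,2$, put $D_3=D_1D_2$, and set $u(t)=t/A_2(t)$, so $u(0)=0$ and $u'(0)=1/A_2(0)=1$. From \eqref{1.7}, $A_3(0)=A_2(0)A_1(0)=1$ and one differentiation gives $A_3'(0)=A_2'(0)+A_1'(0)=a_{2,1}+a_{1,1}$. Setting $t=0$ in \eqref{1.7-2} yields $Z_3(0)=Z_1(0)+Z_2(0)=a_{1,1}+a_{2,1}$, matching $A_3'(0)$. Differentiating \eqref{1.7-2} and evaluating at $t=0$ produces
\[
Z_3'(0) = -Z_2(0)Z_1(0) + Z_1'(0) + A_1'(0)Z_2(0) + Z_2'(0);
\]
the hypotheses $Z_i(0)=a_{i,1}$ and $Z_i'(0)=0$ make the first and third terms cancel, so $Z_3'(0)=0$.

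For inversion, let $D\in R_{1,1,1}$ with inverse $D^*=(g^*,f^*)$. Evaluating \eqref{1.8} at $t=0$ gives $A^*(0)=1/A(0)=1$; implicit differentiation (as in Remark 4.3) yields $(A^*)'(0)=-A'(0)=-a_1$. Writing $\phi(t)=t/A(t)$ and $\psi(t)=Z(t)/(tZ(t)-A(t))$, equation \eqref{1.8-2} reads $Z^*(\phi(t))=\psi(t)$, with $\phi'(0)=1$. At $t=0$ we get $Z^*(0)=\psi(0)=-z_0=-a_1=(A^*)'(0)$, as required. For $(Z^*)'(0)=\psi'(0)$, a quick quotient-rule calculation gives numerator $Z'(0)(-A(0))-Z(0)(Z(0)-A'(0))=-z_1-z_0(z_0-a_1)$, which vanishes by $z_0=a_1$ and $z_1=0$; the denominator $(tZ-A)^2|_{t=0}=1$ is harmless.

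The main obstacle is the cancellation in the derivative of \eqref{1.7-2}, which produces four terms involving $Z_i(0)$, $Z_i'(0)$, $A_i'(0)$, and $u'(0)$; the point is that the defining relation $z_{i,0}=a_{i,1}$ forces $Z_2(0)Z_1(0)=a_{2,1}a_{1,1}$ to cancel exactly against $A_1'(0)Z_2(0)=a_{1,1}a_{2,1}$, which one would not predict without writing out both expansions. An analogous cancellation, engineered this time by $z_0=a_1$, drives the inversion case and makes that computation essentially a one-line verification.
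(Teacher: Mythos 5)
Your proposal is correct and follows essentially the same route as the paper: closure under products and inversion is verified by evaluating the zeroth and first Taylor coefficients of $A_3$, $Z_3$, $A^\ast$, $Z^\ast$ at $t=0$ via formulas \eqref{1.7}, \eqref{1.7-2}, \eqref{1.8}, \eqref{1.8-2}, with the same cancellations driven by $z_{i,0}=a_{i,1}$ and $z_{i,1}=0$. Your explicit check that the identity $(1,t)$ lies in $R_{1,1,1}$ and your retention of the (vanishing) $Z_1'(0)$ term are minor additions in care, not in substance.
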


\begin{proof}
Let $(g_1,f_1)$ and $(g_2,f_2)\in R_{1,1,1}$, and let $(g_3,f_3)=(g_1,f_1)(g_2,f_2)$, where the $A$-sequences $(a_{i,j})_{j=0,1,\ldots}$ and the $Z$-sequences $(z_{i,j})_{j=0,1,\ldots}$ of $(g_i,f_i)$, $i=1$ and $2$, satisfy the conditions

\[
a_{i,0}=1,\quad  a_{i,1}=z_{i,0}, \quad \mbox{and} \quad z_{i,1}=0,
\]
for $i=1$ and $2$. Then from \eqref{1.7} we have the generating function of the $A$-sequence of $(g_3,f_3)$  

\be\label{1.7-1}
A_3(t)=A_2(t)A_1\left( \frac{t}{A_2(t)}\right).
\ee
Hence from $A_i(0)=a_{i,0}=1$, the constant term of $A_3(t)$ is 

\[
A_3(0)=A_2(0)A_1(0)=1,
\]
i.e., $a_{3,0}=1$. Furthermore, \eqref{1.7-1} also implies 

\[
a_{3,1}=A'_3(0)=A'_2(0)A_1(0)+A_2(0)A'_1(0)=a_{1,1}+a_{2,1}.
\]

From \eqref{1.7-2}, we obtain the generating function of the $Z$-sequence of $(g_3,f_3)$ 

\be\label{1.7-2-1}
Z_3(t)=\left( 1-\frac{t}{A_2(t)}Z_2(t)\right) Z_1\left( \frac{t}{A_2(t)}\right)+A_1\left( \frac{t}{A_2(t)}\right) Z_2(t).
\ee
Therefore, the constant term of $Z_3(t)$ is 

\[
Z_3(0)=Z_1(0)+A_1(0)Z_2(0)=a_{1,1}+a_{2,1}=A'_3(0),
\]
or equivalently,

\[
z_{3,0}=a_{3,1}.
\]
In addition, \eqref{1.7-2-1} gives

\bns
Z'_3(0)&&=(-Z_2(0))Z_1(0)+A'_1(0)Z_2(0)+A_1(0)Z'_2(0)\\
&&=-z_{1,0}z_{2,0}+a_{1,1}z_{2,0}=-z_{1,0}z_{2,0}+z_{1,0}z_{2,0}=0.
\ens

Let $(g,f)\in R_{1,1,1}$, and let $(g^\ast,f^\ast)=(g,f)^{-1}$, i.e., $g^\ast=1/(g\circ \bar f)$ and $f^\ast=\bar f$, where the $A$-sequences $(a_j)_{j=0,1,\ldots}$ and the $Z$-sequences $(z_{j})_{j=0,1,\ldots}$ of $(g,f)$ satisfy the conditions

\[
a_{0}=1,\quad  a_{1}=z_{0}, \quad \mbox{and} \quad z_1=0.
\]
Then from \eqref{1.7} and \eqref{1.7-2}, we have 

\be\label{1.8-1}
A^\ast\left( \frac{t}{A(t)} \right)=  \frac{1}{A(t)}
\ee
and 

\be\label{1.8-2-1}
Z^\ast \left( \frac{t}{A(t)}\right)=\frac{Z(t)}{tZ(t)-A(t)},
\ee
respectively. Denote the $A$-sequence and the $Z$-sequence of $(g,f)^{-1}$ by $(a^\ast_j)_{j=0,1,\ldots}$ and 
$(Z^\ast_j)_{j=0,1,\ldots}$. Then \eqref{1.8-1} yields 

\[
a^\ast_0=A^\ast(0)=\frac{1}{A(0)}=1,
\]
and 

\[
a^\ast_1=(A^\ast(t))'|_{t=0}=-\frac{1}{A^2(0)}A'(0)=-a_1.
\]
Meanwhile, using \eqref{1.8-2-1} we obtain

\[
Z^\ast_0=Z^\ast(0)=-Z(0)=-z_0=-a_1=a^\ast_1.
\]
Finally, we have 

\bns
Z^\ast_1&&=(Z^\ast(t))'|_{t=0}=-\frac{Z'(0)A(0)-Z(0)(A'(0)-Z(0)}{A(0)^2}\\
&&=-\frac{Z(0)^2-A'(0)Z(0)}{A(0)^2}=a_1z_0-z_0^2=0
\ens
because $a_1=z_0$. This complete the proof of the theorem.
\end{proof}
\eop
\medbreak
\noindent{\bf Example 4.1} The Riordan matrix $(1/(1-kt), t/(1-kt))$ begins 

\[
\left[\begin{array}{cccccccc}
1 &  &  &  &  &  &  &\\ 
k & 1 &  &  &  &  &  &\\ 
k^2 & 2k & 1 &  &  &  &  &\\ 
k^3 & 3k^2 & 3k& 1 &  &  & &\cdots  \\ 
k^4 &4k^3 & 6k^2 & 4k& 1 &  & & \\ 
k^5 & 5 k^4& 10k^3 & 10k^2& 5k & 1 &&  \\ 
&  &  & \cdots  &  &  & &
\end{array}
\right] 
\]

Then its $A$-, $Z$-, and $B$-sequences are $(1,k,0,\ldots)$, $(k,0,\ldots)$, and $(k,0,\ldots)$, respectively, where 
$B$-sequence is defined for all entries of the matrix.
\medbreak

Let $(g,f)$ be a Riordan matrix, where $g=\sum_{j\geq 0} g_j t^j$ and $f=\sum_{j\geq 1}f_jt^j$ with $g_0=1$ and $f_1\not=0$. Equation \eqref{eq:1.2} shows that the generating function $A(t)$ of the $A$-sequence, $(a_j)_{j=0,1,\ldots}$, of $(g,f)$ satisfies 

\[
f(t)=t(a_0+a_1 f+a_2f^2+\cdots),
\]
which implies 

\be\label{6.8}
f_1=a_0\quad \mbox{and} \quad f_2=a_1f_1=a_0a_1.
\ee
Equation \eqref{eq:1.4} shows that the generating function $Z(t)$ of the $Z$-sequence, $(z_j)_{j=0,1,\ldots}$, of $(g,f)$,

\[
g(\bar f)=\frac{1}{1-\bar f Z}.
\]
The above equation can be written as 

\[
Z=\frac{g(\bar f)-1}{\bar f g(\bar f)}=\frac{ g_1+g_2(\bar f)+g_3(\bar f)^2+\cdots}{g_0+g_1(\bar f)+g_2(\bar f)^2+\cdots}.
\]
Thus, 

\be\label{6.9}
z_0=Z(0)=g_1/g_0=g_1,
\ee
and by noting $\bar f'(0)=1/f'(0)=1/f_1$, 

\be\label{6.10}
z_1=Z'(0)=\frac{g_0(g_2\bar f'(0))-g_1^2\bar f'(0)}{g_0^2}=\frac{g_0g_2-g_1^2}{f_1g_0^2}.
\ee
And we have the following result.

\begin{corollary}\label{cor:6.5}
Let $(g,f)$ be a Riordan matrix, where $g=\sum_{j\geq 0} g_j z^j$ and $f=\sum_{j\geq 1}f_jt^j$ with $g_0=1$ and $f_1\not=0$. Then $(g,f)\in R_{1,1,1}$ if and only if $f_1=1$, $f_2=g_1$, and $g_2=g_1^2$, or equivalently, 
$f_1=1$, $f_2=g_1$, and $g_2=f_2^2$.
\end{corollary}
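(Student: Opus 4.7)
The plan is to translate the three defining conditions of $R_{1,1,1}$ --- namely $a_0=1$, $a_1=z_0$, and $z_1=0$ --- directly into conditions on the coefficients $f_1, f_2, g_1, g_2$ using the formulas \eqref{6.8}, \eqref{6.9}, and \eqref{6.10} that were just derived. Because each of these identities expresses a leading Taylor coefficient of $A$ or $Z$ as an explicit rational function of $f_1, f_2, g_1, g_2$ (with $g_0=1$), the translation is essentially a one-line substitution per condition, and no further analytic machinery is required.

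First I would record that, by \eqref{6.8} with $g_0=1$, the condition $a_0=1$ is equivalent to $f_1=1$. Assuming this, \eqref{6.8} then gives $f_2=a_1$, and \eqref{6.9} gives $z_0=g_1$, so the condition $a_1=z_0$ becomes exactly $f_2=g_1$. Next, using \eqref{6.10} with $g_0=1$ and $f_1=1$, one has $z_1=g_2-g_1^2$, so the condition $z_1=0$ is equivalent to $g_2=g_1^2$. Combining the three, $(g,f)\in R_{1,1,1}$ if and only if $f_1=1$, $f_2=g_1$, and $g_2=g_1^2$.

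Finally I would note the equivalence $g_2=g_1^2\iff g_2=f_2^2$: given $f_2=g_1$, squaring yields $f_2^2=g_1^2$, so $g_2=g_1^2$ and $g_2=f_2^2$ are the same relation. This gives the alternative form stated in the corollary, completing the proof.

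The only potential pitfall is bookkeeping: one must be careful to apply \eqref{6.10} after substituting $g_0=1$ and $f_1=1$, since otherwise the denominator $f_1g_0^2$ obscures the clean equivalence $z_1=0\iff g_2=g_1^2$. Beyond that, the argument is a direct verification with no hidden content.
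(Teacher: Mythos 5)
Your proof is correct and follows exactly the paper's own argument: the paper likewise just substitutes \eqref{6.8}--\eqref{6.10} to convert $a_0=1$, $z_0=a_1$, $z_1=0$ into $f_1=1$, $g_1=f_2$, $g_0g_2=g_1^2$. The only cosmetic quibble is that the first step ($a_0=1\iff f_1=1$) comes from \eqref{6.8} alone and does not need $g_0=1$; otherwise the bookkeeping is exactly as in the paper.
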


\begin{proof}
One may use \eqref{6.8} - \eqref{6.10} to transfer the sufficient and necessary conditions, $a_0=1$, $z_0=a_1$ and $z_1=0$, for $(g,f)\in R_{1,1,1}$ to be 

\[
f_1=1,\quad g_1=f_2,\quad \mbox{and}\quad g_0g_2=g_1^2,
\]
which proves the corollary.
\end{proof}

\section{$A$- and $B$- sequences of Pascal-like Riordan matrices} 

We shall call a lower-triangular matrix $(a_{n,k})$ is {\it Pascal-like} if 

1.  $a_{n,k}=a_{n,n-k}$ and 

2. $a_{n,o}=a_{n,n}=1$.

It is clear that not all Pascal-like matrices are Riordan matrices. If a Pascal-like matrix is also a Riordan matrix, for example, the Pascal matrix, then it is called a Pascal-like Riordan matrix. 

A Pascal-like matrix will then be the coefficient matrix of a family of monic reciprocal polynomials. Here, a polynomial $P_{n}(x)=\sum^{n}_{k=0}a_{n,k}x^{k}$ of degree $n$ is said to be reciprocal 
if 

\[
P_{n}(x)=x^{n}P_{n}(1/x). 
\]
Hence, we have  

\[
[x^k]P_n(x)=[x^k]x^nP_n(1/x)=[x^k]\sum^n_{j=0}a_{n,j}x^{n-j},
\]
which implies 

\[
a_{n,k}=a_{n,n-k}.
\]

\begin{theorem}\label{thm:4.1}
Let $(a_0,a_1,a_2,\ldots)$ be the $A$-sequence of a Pascal-like Riordan matrix $P=(p_{n,k})_{n\geq k\geq 0}$. Then 

\be\label{4.1}
\left. a_1(1-a_1)\right| a_j
\ee
for $j\geq 2$, or equivalently, $a_2|a_j$ for all $j\geq 2$ due to $a_2=a_1(1-a_1)$. Furthermore, we have recursive formula for $a_j$ as 

\be\label{4.2}
a_{j}=(j-2)a_1(1-a_1)-a_2p_{j-1,2}-\cdots-a_{j-1}p_{j-1,j-2}.
\ee
\end{theorem}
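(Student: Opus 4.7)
The plan is to exploit the Pascal-like symmetry $p_{n,k}=p_{n,n-k}$ together with the second fundamental theorem of Riordan matrices (the $A$-sequence relation $p_{n+1,k+1}=\sum_{i\geq 0}a_i p_{n,k+i}$) by computing a single entry in two different ways. Two preliminary facts are needed. First, $p_{n+1,n+1}=1$ combined with $p_{n+1,n+1}=a_0 p_{n,n}=a_0$ (the $A$-sequence at $k=n$) gives $a_0=1$. Second, applying the $A$-sequence to $p_{n,n-1}=a_0 p_{n-1,n-2}+a_1 p_{n-1,n-1}$ and invoking the Pascal-like symmetries $p_{n,n-1}=p_{n,1}$ and $p_{n-1,n-2}=p_{n-1,1}$ yields $p_{n,1}=p_{n-1,1}+a_1$, whence $p_{n,1}=1+(n-1)a_1$ by induction.

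The heart of the argument is to equate $p_{n+1,1}$ and $p_{n+1,n}$, which are equal by the Pascal-like symmetry. Via the $A$-sequence at $k=0$ one has $p_{n+1,1}=\sum_{i=0}^{n}a_i p_{n,i}$, while via the $A$-sequence at $k=n-1$ one has $p_{n+1,n}=a_0 p_{n,n-1}+a_1 p_{n,n}=p_{n,1}+a_1$. Expanding and using $a_0=1$, $p_{n,0}=p_{n,n}=1$, and the closed form $p_{n,1}=1+(n-1)a_1$, the constant and $a_1$-linear contributions cancel, leaving
\[
a_n \;=\; (n-1)\,a_1(1-a_1)\;-\;\sum_{i=2}^{n-1} a_i\,p_{n,i}.
\]
The case $n=2$ collapses the sum and produces $a_2=a_1(1-a_1)$, identifying the divisor appearing in \eqref{4.1}. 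Rewriting the identity as $a_n=(n-1)a_2-\sum_{i=2}^{n-1}a_i p_{n,i}$ and inducting on $n\geq 2$ then gives $a_2\mid a_n$ for every $n\geq 2$, which is \eqref{4.1}. The stated recursion \eqref{4.2} follows by reindexing $n\mapsto j$ and, where required, absorbing one layer of the $A$-sequence identity $p_{j,i}=\sum_{k\geq 0}a_k p_{j-1,i-1+k}$ to convert entries in row $j$ into entries in row $j-1$.

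The main obstacle is cosmetic rather than structural: matching the indices as printed in \eqref{4.2}, which feature $p_{j-1,\cdot}$ and the leading coefficient $(j-2)$ rather than the $p_{j,\cdot}$ and $(j-1)$ that emerge directly, requires a careful bookkeeping step to execute the row shift and verify that no out-of-range terms are introduced. All subsequent computation is mechanical, and no additional conceptual ingredient beyond the Pascal-like symmetry and the $A$-sequence recursion is needed.
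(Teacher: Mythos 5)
Your argument is the same as the paper's: establish $a_0=1$ and $p_{n,n-1}=1+(n-1)a_1$ by induction, then equate the two evaluations of $p_{n+1,1}=p_{n+1,n}$ to extract $a_2=a_1(1-a_1)$ and the recursion, and finish the divisibility claim by induction. Your intermediate identity
\[
a_n=(n-1)a_1(1-a_1)-\sum_{i=2}^{n-1}a_i\,p_{n,i}
\]
is correct, and it is in fact what the paper's own derivation produces before an off-by-one slip (the paper expands $p_{k+1,1}=\sum_{i=0}^{k}a_ip_{k,i}$ but writes the tail as $a_kp_{k,k-1}+a_{k+1}$ instead of $a_{k-1}p_{k,k-1}+a_k$). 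Consequently the one step you defer --- ``absorbing one layer of the $A$-sequence identity'' to convert your row-$n$ formula into the printed \eqref{4.2} with row $j-1$ and coefficient $j-2$ --- would fail: the two formulas are not related by a row shift, as one can check on the Delannoy triangle $(1/(1-t),\,t(1+t)/(1-t))$, whose $A$-sequence $(1,2,-2,6,-22,\ldots)$ satisfies your recursion but not \eqref{4.2} as printed. So do not attempt that reconciliation; your version is the correct statement of the recursion, and \eqref{4.1} is fully proved by your argument.
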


\begin{proof}
We prove \eqref{4.1} by using induction. Let $P$ has $A$-sequence $(a_0,a_1,a_2,\ldots)$. Then, it is easy to see that $a_0=1$ and 

\be\label{4.3}
p_{n,n-1}=1+(n-1)a_1
\ee
for $n\geq 1$. More precisely, we have $p_{1,0}=1$ and $p_{2,1}=1+a_1p_{1,1}=1+a_1$. If $p_{n-1,n-2}=1+(n-2)a_1$, then 

\[
p_{n,n-1}=p_{n-1,n-2}+a_1p_{n-1,n-1}=1+(n-1)a_1.
\]
Since 

\[
p_{3,1}=1+a_1p_{2,1}+a_2
\]
and $p_{3,1}=p_{3,2}$, from \eqref{4.3} we have 

\[
1+a_1p_{2,1}+a_2=1+2a_1,
\]
or equivalently,

\[
a_1(1+a_1)+a_2=2a_1,
\]
which shows that $a_2=a_1(1-a_1).$ 

Assume that $a_1(1-a_1)| a_j$ for all $3\leq j\leq k$, then from the definition of Pascal-like Riordan matrix: $p_{k+1,1}=p_{k+1,k}$, we have

\[
p_{k+1,1}=1+a_1p_{k,1}+a_2p_{k,2}+\cdots +a_kp_{k,k-1}+a_{k+1}=1+ka_1=p_{k+1,k}.
\]
Thus,

\bns
a_{k+1}&=&ka_1-a_1p_{k,1}-a_2p_{k,2}-\cdots-a_kp_{k,k-1}\\
&=&ka_1-a_1p_{k,k-1}-a_2p_{k,2}-\cdots-a_kp_{k,k-1}\\
&=&ka_1-a_1(1+(k-1)a_1)-a_2p_{k,2}-\cdots-a_kp_{k,k-1}\\
&=&(k-1)a_1(1-a_1)-a_2p_{k,2}-\cdots-a_kp_{k,k-1},
\ens
which implies $a_1(1-a_1)|a_{k+1}$ from the induction assumption. The rightmost expression of the above equations imply \eqref{4.2}. Since $a_2=a_1(1-a_1)$, we have $a_2|a_j$ for all $j\geq 2$. 
\end{proof}

\begin{corollary}\label{cor:4.2}
All Pascal-like Riordan matrices have no $B$-sequence except the matrix $(1/(1-t),t)$ and Pascal matrix $(1/(1-t), t/(1-t))$. Here,
the type-I $B$-sequence of $(1/(1-t),t)$ is $(0,0,0,\ldots)$ while its type-II $B$-sequence is $(1,0,0,\ldots)$. Both type-I and type-II $B$-sequences of $(1/(1-t), t/(1-t))$ are $(1,0,0,\ldots)$.
\end{corollary}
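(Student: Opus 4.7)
The plan is to combine the necessary condition from Theorem~\ref{thm:1.1} (a type-I $B$-sequence forces $a_0=1$ and $a_2=0$) with Theorem~\ref{thm:4.1} (for any Pascal-like Riordan matrix, $a_2=a_1(1-a_1)$). Setting $a_2=0$ gives $a_1(1-a_1)=0$, so $a_1\in\{0,1\}$; the recursion \eqref{4.2} then forces the entire tail of the $A$-sequence to vanish, leaving only $A(t)=1$ or $A(t)=1+t$. Combined with the Pascal-like forcing $g(t)=1/(1-t)$, this pins down $(g,f)$ as one of the two exceptional matrices, and the rest of the corollary is verified by direct substitution.

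Concretely, once $a_1(1-a_1)=0$ and $a_2=0$ are in hand, I would induct on $j\geq 2$: assuming $a_i=0$ for $2\le i\le j-1$, the leading term $(j-2)a_1(1-a_1)$ of \eqref{4.2} vanishes and every remaining summand $a_i p_{j-1,i}$ is annihilated by the inductive hypothesis, so $a_j=0$. Hence $A(t)=1+a_1 t$ with $a_1\in\{0,1\}$, and solving $f=tA(f)$ gives $f=t$ or $f=t/(1-t)$. Since the Pascal-like conditions force $p_{n,0}=g_n=1$ for every $n\geq 0$, we must have $g(t)=1/(1-t)$, so the only possibilities are $(1/(1-t),t)$ and $(1/(1-t),t/(1-t))$.

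To finish the statement I would compute both $B$-sequences for each exception. With $g=1/(1-t)$, the identity $Z(t)=(g(\bar f)-1)/(\bar f\,g(\bar f))$ collapses to $Z(t)\equiv 1$ irrespective of $f$; substituting into \eqref{6.4-4} gives $\hat B(t\bar f(t))\equiv 1$, so the type-II $B$-sequence is $(1,0,0,\ldots)$ for both matrices. For the type-I sequences, inserting $A(t)=1$ into \eqref{1.5} yields $tB(t^2)=0$, i.e.\ $B(t)=0$, while $A(t)=1+t$ yields $tB(t^2/(1+t))=t$, i.e.\ $B(t)=1$. These match the claimed $(0,0,\ldots)$ and $(1,0,0,\ldots)$, respectively.

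The main obstacle is the induction step that cleans out the tail of the $A$-sequence: one must check that once $a_1(1-a_1)=0$ the coefficients $a_i$ for $i\geq 2$ cannot ``restart'' via the Pascal-like entries $p_{j-1,i}$ in \eqref{4.2}. After that all remaining work consists of routine substitutions into the formulas \eqref{1.5} and \eqref{6.4-4} already established in Sections~2 and~3.
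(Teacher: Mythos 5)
Your proposal is correct and follows the same route as the paper: combine the identity $a_2=a_1(1-a_1)$ from Theorem~\ref{thm:4.1} with the necessary condition $a_0=1$, $a_2=0$ from Theorem~\ref{thm:1.1} to force $a_1\in\{0,1\}$, and hence pin down the two exceptional matrices. The paper's own proof is a single sentence that essentially stops there; your induction via \eqref{4.2} clearing the tail of the $A$-sequence, the determination of $g=1/(1-t)$ from $p_{n,0}=1$, and the explicit verification of the type-I and type-II $B$-sequences of the two exceptional matrices supply details the paper omits --- and which are genuinely needed, since $a_2=0$ by itself is only a necessary, not a sufficient, condition for a type-I $B$-sequence.
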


\begin{proof}
A Pascal-like matrix has a $B$-sequence if and only if its $A$-sequence possesses sequence element $a_2=a_1(1-a_1)=0$, i.e., $a_1=0$ or $a_1=1$, or equivalently, $P=(1/(1-t),t)$ or $P=(1/(1-t), t/(1-t))$.
\end{proof}

\noindent{\bf Acknowledgements}

We thank the referees and the editor for their careful reading and helpful suggestions.

\end{document}